\newtheorem{thm}{Theorem}[subsection]
\newtheorem{lemma}[thm]{Lemma}
\newtheorem{conj}[thm]{Conjecture}
\newtheorem{coro}[thm]{Corollary}
\newtheorem{prop}[thm]{Proposition}
\theoremstyle{definition}
\newtheorem{defn}[thm]{Definition}
\theoremstyle{remark}
\newtheorem{rem}[thm]{Remark}
\newtheorem{ex}[thm]{Example}
\numberwithin{equation}{section}
\def\ZZ{\mathbb{Z}}
\def\NN{\mathbb{N}}
\newcommand{\gmat}[2][ccccccccccccccccccccccccccccccccc]{\left[\begin{array}{#1} #2\\ \end{array}\right]}
\colorlet{gpurple}{red!35!blue}
\tikzstyle{mutable}=[inner sep=1mm,circle,draw,minimum size=2mm]
\tikzstyle{frozen}=[inner sep=.9mm,rectangle,draw]
\tikzstyle{dot} = [fill=black!25,inner sep=0.5mm,circle,draw,minimum size=1mm]
\tikzstyle{marked}=[inner sep=0.5mm,circle,draw,blue!75!black,fill=blue!50]
\tikzstyle{oriented}=[draw=blue!50,thick,decoration={markings,mark=at position 0.52 with {\arrow{>}}},postaction={decorate}]
\tikzstyle{wall}=[draw=blue!50,thick]
\title{The existence of a maximal green sequence is not invariant under quiver mutation}  
\author{Greg Muller \today}
\begin{document}

\begin{abstract}
This note demonstrates that the quiver in Figure \ref{fig: 223} does not admit a maximal green sequence.  Since this quiver is mutation-equivalent to a quiver which does admit a maximal green sequence, this provides a counterexample to the conjecture that the existence of maximal green sequences is invariant under quiver mutation.  The proof uses the `scattering diagrams' of Gross-Hacking-Keel-Kontsevich to show that a maximal green sequence for a quiver determines a maximal green sequence for any induced subquiver.
\end{abstract}

\maketitle

\def\Q{\mathsf{Q}}

%
%
%
%

\begin{figure}[h!t]	
	\begin{tikzpicture}
	\begin{scope}
		\path[use as bounding box] (-1,-.5) rectangle  (1,1);
		\node[mutable,inner sep=.15cm] (1) at (90:1) {};
		\node[mutable,inner sep=.15cm] (2) at (-30:1) {};
		\node[mutable,inner sep=.15cm] (3) at (210:1) {};
		\draw[-angle 90,relative, out=15,in=165] (1) to (2);
		\draw[-angle 90,relative, out=-15,in=-165] (1) to (2);
		\draw[-angle 90,relative, out=15,in=165] (2) to (3);
		\draw[-angle 90,relative, out=-15,in=-165] (2) to (3);
		\draw[-angle 90,relative, out=25,in=155] (3) to (1);
		\draw[-angle 90,relative] (3) to (1);
		\draw[-angle 90,relative, out=-25,in=-155] (3) to (1);
	\end{scope}
	\end{tikzpicture}
\caption{The counterexample}
\label{fig: 223}
\end{figure}
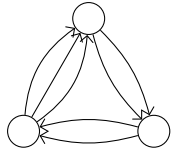


\section{Quivers and maximal green sequences}


\subsection{Quiver mutation}

A \textbf{quiver} is a finite directed graph without loops or 2-cycles.  A quiver $\Q$ may be \textbf{mutated} at a vertex $k$ to produce a new quiver $\mu_k(\Q)$ in three steps.

\noindent \begin{minipage}[l]{3.2in}
	\vspace{.1in}
	\begin{enumerate}
		\item For each pair of arrows $i\rightarrow k \rightarrow j$ through the vertex $k$, add an arrow $i\rightarrow j$.
		\item Reverse the orientation of every arrow incident to $k$.
		\item Cancel any directed 2-cycles in pairs.
	\end{enumerate}
\end{minipage}
\hspace{.3in}
\begin{minipage}[l]{1.5in}
	\begin{tikzpicture}
	\node at (-.1in,.05in) {$\Q$};
	\begin{scope}[xshift=.25in,yshift=0in,scale=.5]
		\node[dot, fill=white] (x1) at (90:1) {};
		\node[dot, fill=white] (x2) at (210:1) {};
		\node[dot, fill=white] (x3) at (-30:1) {};
		\node[above] at (x1) {$k$};
		\draw[-angle 90] (x1) to (x2);
		\draw[-angle 90] (x2) to (x3);
		\draw[-angle 90] (x3) to (x1);
	\end{scope}
	\draw[dashed, -angle 90,relative, out=15,in=165] (.5in,.05in) to node[above] {(1)} (1.0in,-.05in);
	\begin{scope}[xshift=1.25in,yshift=-.15in,scale=.5]
		\node[dot, fill=white] (x1) at (90:1) {};
		\node[dot, fill=white] (x2) at (210:1) {};
		\node[dot, fill=white] (x3) at (-30:1) {};
		\node[above] at (x1) {$k$};
		\draw[-angle 90] (x1) to (x2);
		\draw[-angle 90] (x3) to (x1);
		\draw[-angle 90,relative, out=15,in=165] (x2) to (x3);
		\draw[angle 90-,relative, out=-30,in=-150] (x2) to (x3);
	\end{scope}
	\draw[dashed, -angle 90,relative, out=-15,in=195] (1.0in,-.15in) to node[below right] {(2)} (.25in,-.6in);
	\begin{scope}[xshift=0in,yshift=-.75in,scale=.5]
		\node[dot, fill=white] (x1) at (90:1) {};
		\node[dot, fill=white] (x2) at (210:1) {};
		\node[dot, fill=white] (x3) at (-30:1) {};
		\node[above] at (x1) {$k$};
		\draw[-angle 90] (x2) to (x1);
		\draw[-angle 90] (x1) to (x3);
		\draw[-angle 90,relative, out=15,in=165] (x2) to (x3);
		\draw[angle 90-,relative, out=-30,in=-150] (x2) to (x3);
	\end{scope}
	\draw[dashed, -angle 90,relative, out=15,in=165] (.25in,-.7in) to node[below] {(3)} (.75in,-.8in);
	\begin{scope}[xshift=1.0in,yshift=-.9in,scale=.5]
		\node[dot, fill=white] (x1) at (90:1) {};
		\node[dot, fill=white] (x2) at (210:1) {};
		\node[dot, fill=white] (x3) at (-30:1) {};
		\node[above] at (x1) {$k$};
		\draw[-angle 90] (x2) to (x1);
		\draw[-angle 90] (x1) to (x3);
	\end{scope}
	\node at (1.5in,-.85in) {$\mu_k(Q)$};
	\end{tikzpicture}
\end{minipage}

Two quivers are \textbf{mutation equivalent} if there is a sequence of mutations taking one to the other.  Mutation equivalence is an equivalence relation, because mutating at the same vertex twice in a row returns to the original quiver.

\begin{rem}
Quiver mutation was introduced in \cite{FZ02} to encode the combinatorial part of \emph{seed mutation} in the theory of cluster algebras.  
\end{rem}

\subsection{$g$-vectors}

In this section, we use the Sign Coherence Theorem for quivers to give an elementary definition of $g$-vectors, via \emph{$g$-seeds}.
A \textbf{$g$-seed} is a quiver $\Q$, together with a basis of the lattice $\ZZ^N$ indexed by the vertices of $\Q$.  The basis element $g_k$ associated to a vertex $k$ is called its \textbf{$g$-vector}.

\begin{ex}
Let $\Q$ be a quiver, and index the vertices of $\Q$ by the numbers $1,2,...,N$.  A $g$-seed may be made from $\Q$, where the $g$-vector of the $i$th vertex is the standard basis element $e_i:=(0,0,...,1,...,0)$.  A $g$-seed of this form is called an \textbf{initial $g$-seed}.
\end{ex}

\begin{figure}[h!t]	
	\begin{tikzpicture}
	\begin{scope}
		\path[use as bounding box] (-1,-.5) rectangle  (1,1);
		\node[mutable] (1) at (90:1) {$1$};
		\node[mutable] (2) at (-30:1) {$2$};
		\node[mutable] (3) at (210:1) {$3$};
		\draw[-angle 90,relative, out=15,in=165] (1) to (2);
		\draw[-angle 90,relative, out=-15,in=-165] (1) to (2);
		\draw[-angle 90,relative, out=15,in=165] (2) to (3);
		\draw[-angle 90,relative, out=-15,in=-165] (2) to (3);
		\draw[-angle 90,relative, out=25,in=155] (3) to (1);
		\draw[-angle 90,relative] (3) to (1);
		\draw[-angle 90,relative, out=-25,in=-155] (3) to (1);
	\end{scope}
	\draw[-angle 90,double distance=.5mm] (.75in,.25) to (1.25in,.25);
	\begin{scope}[xshift=2in]
		\path[use as bounding box] (-1,-.5) rectangle  (1,1);
		\node[rounded rectangle,draw,fill=green!10] (1) at (90:1) {$1,0,0$};
		\node[rounded rectangle,draw,fill=green!10] (2) at (-25:1) {$0,1,0$};
		\node[rounded rectangle,draw,fill=green!10] (3) at (205:1) {$0,0,1$};
		\draw[-angle 90,relative, out=15,in=165] (1) to (2);
		\draw[-angle 90,relative, out=-15,in=-165] (1) to (2);
		\draw[-angle 90,relative, out=15,in=165] (2) to (3);
		\draw[-angle 90,relative, out=-15,in=-165] (2) to (3);
		\draw[-angle 90,relative, out=25,in=155] (3) to (1);
		\draw[-angle 90,relative] (3) to (1);
		\draw[-angle 90,relative, out=-25,in=-155] (3) to (1);
	\end{scope}
	\end{tikzpicture}
\caption{Constructing an initial $g$-seed from an indexed quiver}
\label{fig: initial	}
\end{figure}
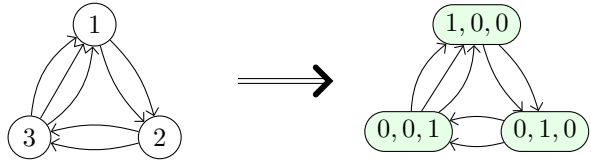

Given a $g$-seed, any element $v\in \ZZ^N$ may be uniquely expressed as\footnote{The coefficient $c_i(v)$ is linear on $\ZZ^N$, and given by the dot product with the \emph{$c$-vector} of vertex $i$.}
\[ v = \sum_{\text{vertices }k\text{ of }\Q} c_k(v) g_k,\;\;\; c_k(v)\in \ZZ \]
A vertex $k$ in a $g$-seed is \textbf{green} if, for all $v\in\NN^N\subset \ZZ^N$, the coefficient $c_k(v)\geq0$.  Similarly, a vertex $k$ in a $g$-seed is \textbf{red} if, for all $v\in \NN^N\subset \ZZ^N$, the coefficient $c_k(v)\leq 0$.  As an example, every vertex in an initial $g$-seed is green.

A $g$-seed may be \textbf{mutated} at a vertex $k$ which is green or red.  The underlying quiver mutates as in the previous section, and the $g$-vectors stay the same except at vertex $k$, which changes as follows.
\[ \mu_k(g_k) := \left\{\begin{array}{cc}
\displaystyle -g_k +\sum_{\forall \text{ arrows } j\rightarrow k} g_j & \text{if $k$ is green} \\
\displaystyle -g_k +\sum_{\forall \text{ arrows } j\leftarrow k} g_j & \text{if $k$ is red} \\
\end{array}\right\} \]
Mutation always turns a green vertex into a red vertex and vice versa, so mutating twice in a row at the same vertex returns to the original $g$-seed.


\begin{thm}[Sign coherence, \cite{DWZ10}]
After any sequence of mutations of an initial $g$-seed, every vertex in the resulting $g$-seed is either green or red.
\end{thm}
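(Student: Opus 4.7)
The plan is to reformulate sign coherence of $g$-vectors as sign coherence of $c$-vectors, then categorify these via decorated representations of a Jacobian algebra, following Derksen, Weyman and Zelevinsky.

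First I would observe that since $c_k:\ZZ^N\to\ZZ$ is linear, the condition $c_k(v)\geq 0$ for all $v\in\NN^N$ is equivalent to $c_k(e_i)\geq 0$ for all standard basis vectors $e_i$. The vector $(c_k(e_1),\ldots,c_k(e_N))\in\ZZ^N$ -- which the footnote identifies as the $c$-vector of vertex $k$ -- depends only on the $g$-seed. Vertex $k$ is green precisely when this $c$-vector lies in $\NN^N$, and red precisely when it lies in $-\NN^N$. So the theorem reduces to showing that every $c$-vector is sign coherent.

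Next, I would choose a generic potential $W$ on the initial quiver $\Q$ and work with decorated representations of the Jacobian algebra $J(\Q,W)$. To the $k$th vertex of an initial $g$-seed associate the negatively decorated simple $S_k^-$, and to the $k$th vertex of any subsequent $g$-seed associate the decorated representation obtained by iterating the DWZ mutation rule. Proceeding by induction on the length of the mutation sequence, I would verify simultaneously that (a) the signed dimension vector of the associated decorated representation equals the corresponding $c$-vector, and (b) every such representation is either an honest module (the \emph{positive} case) or a negatively decorated simple (the \emph{negative} case). Compatibility in (a) is straightforward once the mutation formulas for decorated representations and $g$-seeds are written side by side.

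The main obstacle is (b): showing that DWZ mutation never produces a decorated representation with both a nontrivial module part and a nontrivial negative decoration. This requires a careful analysis of the exact triangle defining the mutated decorated representation, and uses nonvanishing properties of certain morphism spaces in the module category of $J(\Q,W)$ -- this is where the representation-theoretic input, and genericity of $W$, actually enters the argument. Once (a) and (b) are in hand, sign coherence of $c$-vectors is immediate, since honest modules have dimension vectors in $\NN^N$ and negatively decorated simples have signed dimension vectors in $-\NN^N$; translating back yields the theorem as stated.
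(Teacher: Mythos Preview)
The paper does not prove this theorem at all: it is quoted as an external result, attributed to \cite{DWZ10}, with the remark that a categorical proof later appeared in \cite{Pla11} and the skew-symmetrizable case in \cite{GHKK}. There is therefore no ``paper's own proof'' to compare against; the statement functions as a black box that justifies the well-definedness of $g$-seed mutation.

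That said, your sketch is a fair outline of the Derksen--Weyman--Zelevinsky strategy the paper is citing. A couple of small corrections: the potential should be \emph{non-degenerate} rather than ``generic'' (non-degeneracy is what guarantees that mutation of quivers with potential is defined along arbitrary sequences), and the dichotomy in your step (b) is not quite established by analyzing an exact triangle directly---in the DWZ paper it comes from the vanishing of the $E$-invariant for the decorated representations arising from mutation of negative simples, together with the fact that $E=0$ forces the positive and negative parts to have disjoint support. With those adjustments, your plan matches the cited reference.
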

\noindent Hence, a $g$-seed which is mutation equivalent to an initial $g$-seed may be mutated at an arbitrary sequence of vertices.

\begin{rem}
Sign coherence was originally conjectured in \cite{FZ07} in the larger generality of \emph{skew-symmetrizable matrices}.  The case of quivers was first proven in \cite{DWZ10}, and a categorical proof appeared in \cite{Pla11}.  The larger generality of skew-symmetrizable matrices was proven in \cite{GHKK}.
\end{rem}

\begin{rem}
The $g$-vectors of cluster variables with respect to a fixed initial seed were introduced in \cite{FZ07}, as the degrees in the universal grading of the cluster algebra with principal coefficients.  We have used the Sign Coherence Theorem to simplify the recursive identities (6.12) and (6.13) in \cite{FZ07}; specifically, we have removed the explicit dependence on a choice of initial seed.
\end{rem}


\subsection{Maximal green sequences}

Starting with an initial $g$-seed, mutation equivalent $g$-seeds will have all their vertices colored green or red.  The following theorem states that monochromatic $g$-seeds in this equivalence class must be unique.

\begin{thm}\label{thm: greenred}\cite[Prop. 2.10]{BDP12}
Let $(\Q,\{g_k\})$ be a $g$-seed which is mutation equivalent to an initial $g$-seed $(\Q_{in},\{e_k\})$.
\begin{enumerate}
	\item If every vertex of $(\Q,\{g_k\})$ is green, then there is a quiver isomorphism $f:\Q_{in} \rightarrow \Q$ such that $g_{f(k)}=e_k$.
	\item If every vertex of $(\Q,\{g_k\})$ is red, then there is a quiver isomorphism $f:\Q_{in} \rightarrow \Q$ such that $g_{f(k)}=-e_k$.
\end{enumerate}
\end{thm}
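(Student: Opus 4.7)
My plan is to translate the color hypothesis into a geometric statement about the cone spanned by the $g$-vectors, and then to use the mutation-equivalence hypothesis to pin that cone down.

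Let $G$ be the integer matrix whose $k$-th column is $g_k$ written in the standard basis. Since $\{g_k\}$ is a $\ZZ$-basis, $G\in GL_N(\ZZ)$, and the coefficient vector satisfies $\vec{c}(v) = G^{-1} v$. The condition that vertex $k$ is green says exactly that the $k$-th row of $G^{-1}$ takes non-negative values on $\NN^N$, hence is itself non-negative. Thus ``every vertex green'' means $G^{-1}$ has non-negative integer entries, equivalently that $\NN^N \subseteq C$ where $C := \sum_k \NN\, g_k$. If I can also establish the reverse inclusion $C \subseteq \NN^N$, then the two unimodular simplicial cones coincide; matching their extremal rays and using that $\{e_i\}$ and $\{g_k\}$ are each primitive lattice generators along those rays yields a bijection $f : \Q_{in} \to \Q$ with $g_{f(i)} = e_i$. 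That $f$ is a quiver isomorphism can then be read off from the mutation formula for $g$-vectors applied at any single vertex, since the arrow-multiplicities in $\Q$ are determined by how a single mutation changes the $g$-vectors.

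The main obstacle is to prove $C \subseteq \NN^N$: non-negativity and unimodularity of $G^{-1}$ alone are not enough. For instance, $g_1 = 2e_1 - e_2$ and $g_2 = -e_1 + e_2$ form a $\ZZ$-basis of $\ZZ^2$ with $e_1 = g_1 + g_2$ and $e_2 = g_1 + 2g_2$, so the green condition holds while neither $g_k$ lies in $\NN^2$. Mutation-equivalence must therefore be used essentially. My plan is to appeal to the cluster scattering diagram of Gross--Hacking--Keel--Kontsevich (invoked elsewhere in this paper): every $g$-seed reachable from $(\Q_{in},\{e_k\})$ arises as a chamber whose extremal rays are spanned by its $g$-vectors. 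The all-green hypothesis forces this chamber to lie entirely within the positive orthant, but the walls of the scattering diagram adjacent to the initial chamber cut off every other chamber from the interior of the positive orthant. Hence the only positive-orthant chamber is the initial one, giving $C = \NN^N$ as desired.

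Part (2) is parallel: ``every vertex red'' translates to $G^{-1}$ having non-positive entries, hence $-\NN^N \subseteq C$. Applying the same scattering-diagram argument to the unique negative-orthant chamber (spanned by $-e_1, \dots, -e_N$) forces $C = -\NN^N$ and yields a bijection $g_{f(i)} = -e_i$. I expect essentially all of the difficulty to lie in the cone-rigidity step, since plain matrix positivity permits the spurious bases exhibited above.
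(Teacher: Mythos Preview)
The paper does not give its own detailed proof of this theorem; it cites \cite[Prop.~2.10]{BDP12}. However, in Section~\ref{section: gscattering} the paper does sketch exactly your scattering-diagram argument: a $g$-seed mutation-equivalent to the initial seed corresponds to a reachable chamber of $\D(\Q)$, and an all-green (resp.\ all-red) $g$-seed must correspond to the all-positive (resp.\ all-negative) chamber, which is the initial seed (resp.\ its negative). So your strategy coincides with the paper's own justification.

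Two small points. First, one sentence of yours has the inclusion reversed: the all-green hypothesis gives $(\mathbb{R}_{>0})^r \subseteq \mathbb{R}_{>0}g_1+\cdots+\mathbb{R}_{>0}g_r$, not the other way around. The argument then finishes because no wall $(n,W)$ with $n\in\NN^r\setminus\{0\}$ can meet $(\mathbb{R}_{>0})^r$, so the open positive orthant is itself a chamber; two chambers that intersect must coincide, giving $C=\NN^N$. Second, you do not need to recover the quiver isomorphism from the mutation formula: the paper invokes \cite[Lemma~2.9]{GHKK}, which states that the correspondence between mutation-equivalent $g$-seeds and reachable chambers is a \emph{bijection}. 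Once your chamber equals the all-positive chamber, the $g$-seed $(\Q,\{g_k\})$ is literally identified with $(\Q_{in},\{e_k\})$ as a $g$-seed, and the quiver isomorphism is part of that identification.
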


While an all-green $g$-seed always exists in such a mutation equivalence class (the initial seed), there may be no all-red $g$-seed. Two examples of such quivers are given in Figure \ref{fig: nog2r} (for proofs, see \cite[Prop. 2.21]{BDP12} and \cite[Theorem 1]{Sev14}).

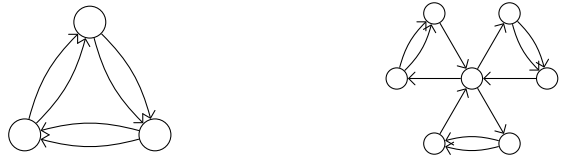
\begin{figure}[h!t]
\begin{tikzpicture}
	\begin{scope}[yshift=-.25cm]
		\node[mutable,inner sep=.15cm] (1) at (90:1) {};
		\node[mutable,inner sep=.15cm] (2) at (-30:1) {};
		\node[mutable,inner sep=.15cm] (3) at (210:1) {};
		\draw[-angle 90,relative, out=15,in=165] (1) to (2);
		\draw[-angle 90,relative, out=15,in=165] (2) to (3);
		\draw[-angle 90,relative, out=15,in=165] (3) to (1);
		\draw[-angle 90,relative, out=-15,in=195] (1) to (2);
		\draw[-angle 90,relative, out=-15,in=195] (2) to (3);
		\draw[-angle 90,relative, out=-15,in=195] (3) to (1);
	\end{scope}
	\begin{scope}[xshift=2in]
		\path[use as bounding box] (-1,-1) rectangle  (1,1);
		\node[mutable,inner sep=.1cm] (0) at (0,0) {};
		\node[mutable,inner sep=.1cm] (1) at (60:1) {};
		\node[mutable,inner sep=.1cm] (2) at (0:1) {};
		\node[mutable,inner sep=.1cm] (3) at (-60:1) {};
		\node[mutable,inner sep=.1cm] (4) at (-120:1) {};
		\node[mutable,inner sep=.1cm] (5) at (-180:1) {};
		\node[mutable,inner sep=.1cm] (6) at (120:1) {};
		\draw[-angle 90,relative, out=15,in=165] (1) to (2);
		\draw[-angle 90,relative, out=15,in=165] (3) to (4);
		\draw[-angle 90,relative, out=15,in=165] (5) to (6);
		\draw[-angle 90,relative, out=-15,in=-165] (1) to (2);
		\draw[-angle 90,relative, out=-15,in=-165] (3) to (4);
		\draw[-angle 90,relative, out=-15,in=-165] (5) to (6);
		\draw[-angle 90] (0) to (1);
		\draw[-angle 90] (2) to (0);
		\draw[-angle 90] (0) to (3);
		\draw[-angle 90] (4) to (0);
		\draw[-angle 90] (0) to (5);
		\draw[-angle 90] (6) to (0);
	\end{scope}
\end{tikzpicture}
\caption{Initial $g$-seeds with these quivers cannot be mutated to an all-red $g$-seed.}
\label{fig: nog2r}
\end{figure}

%
%

Motivated by applications to non-commutative Donaldson-Thomas theory, Keller introduced the idea of a \emph{maximal green sequence} in \cite{Kel11c}.
\begin{defn}
A \textbf{maximal green sequence} for a quiver $\Q$ is a sequence of mutations starting at an initial $g$-seed $(\Q,\{e_k\})$, such that
\begin{itemize}
	\item each mutation is at a green vertex, and
	\item every vertex in the final $g$-seed is red.
\end{itemize}
\end{defn}

\begin{figure}[h!t]	
	\begin{tikzpicture}
	\begin{scope}
		\path[use as bounding box] (-1,-.5) rectangle  (1,1);
		\node[rounded rectangle,draw,fill=green!10] (1) at (90:1) {$1,0,0$};
		\node[rounded rectangle,draw,fill=green!10] (2) at (-30:1) {$0,1,0$};
		\node[rounded rectangle,draw,fill=green!10] (3) at (210:1) {$0,0,1$};
		\draw[-angle 90,relative, out=15,in=165] (2) to (3);
		\draw[-angle 90,relative, out=-15,in=-165] (2) to (3);
		\draw[-angle 90,relative, out=15,in=165] (3) to (1);
	\end{scope}
	\draw[black!50!green,-angle 90,dashed,out=-65,in=165] (.5,-1) to (1.5,-1.75);
	\begin{scope}[xshift=3cm,yshift=-2cm]
		\path[use as bounding box] (-1,-.5) rectangle  (1,1);
		\node[rounded rectangle,draw,fill=green!10] (1) at (90:1) {$1,0,0$};
		\node[rounded rectangle,draw,fill=red!10] (2) at (-30:1) {$0,-1,0$};
		\node[rounded rectangle,draw,fill=green!10] (3) at (210:1) {$0,0,1$};
		\draw[angle 90-,relative, out=15,in=165] (2) to (3);
		\draw[angle 90-,relative, out=-15,in=-165] (2) to (3);
		\draw[-angle 90,relative, out=15,in=165] (3) to (1);
	\end{scope}
	\draw[black!50!green,-angle 90,dashed,out=65,in=-165] (3.5,-.5) to (4.5,.25);
	\begin{scope}[xshift=6cm]
		\path[use as bounding box] (-1,-.5) rectangle  (1,1);
		\node[rounded rectangle,draw,fill=green!10] (1) at (90:1) {$1,0,0$};
		\node[rounded rectangle,draw,fill=red!10] (2) at (-30:1) {$0,-1,0$};
		\node[rounded rectangle,draw,fill=red!10] (3) at (210:1) {$0,0,-1$};
		\draw[-angle 90,relative, out=15,in=165] (2) to (3);
		\draw[-angle 90,relative, out=-15,in=-165] (2) to (3);
		\draw[angle 90-,relative, out=15,in=165] (3) to (1);
	\end{scope}
	\draw[black!50!green,-angle 90,dashed,out=-65,in=165] (6.5,-1) to (7.5,-1.75);
	\begin{scope}[xshift=9cm,yshift=-2cm]
		\path[use as bounding box] (-1,-.5) rectangle  (1,1);
		\node[rounded rectangle,draw,fill=red!10] (1) at (90:1) {$-1,0,0$};
		\node[rounded rectangle,draw,fill=red!10] (2) at (-30:1) {$0,-1,0$};
		\node[rounded rectangle,draw,fill=red!10] (3) at (210:1) {$0,0,-1$};
		\draw[-angle 90,relative, out=15,in=165] (2) to (3);
		\draw[-angle 90,relative, out=-15,in=-165] (2) to (3);
		\draw[-angle 90,relative, out=15,in=165] (3) to (1);
	\end{scope}
	\end{tikzpicture}
\caption{An example of a maximal green sequence}
\label{fig: maxgreen}
\end{figure}
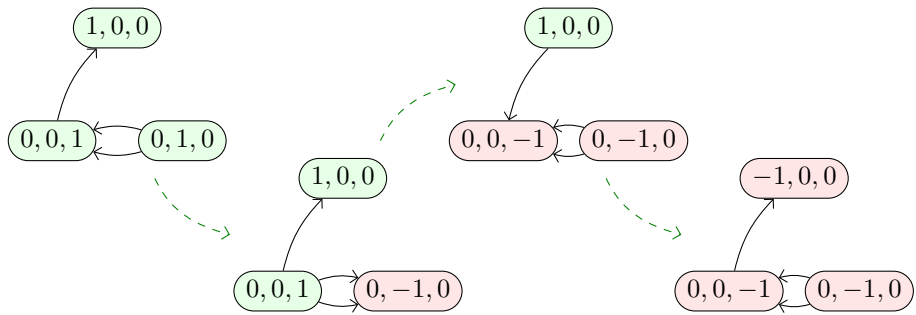

The original purpose of maximal green sequences was to produce identities between products of quantum dilogarithms \cite{Kel11c}.  However, they have been observed in quivers associated to several families of well-behaved \emph{cluster algebras}, such as
\begin{itemize}
	\item acyclic quivers \cite[Lemma 2.20]{BDP12},
	\item quivers from many marked surfaces with boundary \cite{ACCERV13},\cite{Buc14}, and
	\item certain quivers defining the coordinate ring of a double Bruhat cell \cite{YakMGS}.
\end{itemize}
Consequently, it has been conjectured that the existence of a maximal green sequence is equivalent to other good properties of a cluster algebra; for example, that the cluster algebra is equal to its upper cluster algebra.  However, since the cluster algebra only depends on the mutation equivalence class of a quiver, these conjectural equivalences implicitly require the following conjecture.

\begin{conj}\label{conj: MGS}
If a maximal green sequence exists for a quiver $\Q$, then a maximal green sequence exists for any mutation equivalent quiver $\Q'$.
\end{conj}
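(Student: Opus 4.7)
The conjecture claims MGS existence is mutation-invariant, but the paper's title signals that a counterexample is imminent, so I plan a refutation rather than a direct proof. The strategy is dictated by the abstract.

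First, I would develop a necessary condition for MGS existence that is preserved under passage to induced subquivers. Concretely, the target lemma is: if a quiver $\Q$ admits a maximal green sequence, then every induced subquiver $\Q_I$ (obtained by restricting to a subset $I$ of vertices, and keeping all arrows between them) also admits one. The natural framework is the scattering diagrams of Gross-Hacking-Keel-Kontsevich, in which an MGS translates to a chamber-by-chamber path from the ``all-green'' chamber to the ``all-red'' chamber, with each wall-crossing corresponding to a green mutation. One hopes that intersecting such a path with the slice of the scattering diagram corresponding to $I$ yields a green-to-red path in the scattering diagram of $\Q_I$.

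Second, I would consult the catalog of quivers known to admit no MGS, such as those in Figure~\ref{fig: nog2r}. Third, I would search for a mutation class containing both a quiver that does admit an MGS and a quiver whose induced subquiver is one of the MGS-free examples. Applying the monotonicity lemma to the latter would show it admits no MGS, contradicting Conjecture~\ref{conj: MGS}.

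The hard part is step one. The scattering diagram of $\Q_I$ is not a literal sub-collection of walls of the scattering diagram of $\Q$: scattering is a recursive process, and walls produced by vertices outside $I$ can interact with walls supported on $I$. Confirming that the restriction of a green-to-red path to the $I$-slice is itself a legitimate sequence of green mutations for $\Q_I$---with no spurious wall-crossings, and with the correct ``all-red'' endpoint in the subdiagram---requires a delicate analysis of the GHKK construction, and is where the bulk of the technical work will live. The monotonicity step is also the decisive one: the subsequent combinatorial hunt for a mutation class realizing the obstruction is the sort of finite computation that becomes straightforward once the general tool is in hand.
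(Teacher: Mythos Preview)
Your strategy---refute via an induced-subquiver monotonicity lemma proved with scattering diagrams---matches the paper's. The difference lies in the strength of the lemma and hence the shape of the counterexample. Your lemma says only that $\Q^\dagger$ inherits \emph{some} MGS; to exploit it you would need a quiver $\Q'$ (in an MGS-admitting mutation class) containing one of the MGS-free quivers of Figure~\ref{fig: nog2r} as an induced subquiver, so $\Q'$ would have at least four vertices and the search is not obviously guaranteed to succeed. The paper proves a refinement: if an MGS for $\Q$ \emph{begins} with mutations at vertices of $\Q^\dagger$, then some MGS for $\Q^\dagger$ begins with the same mutations (Theorem~\ref{thm: induced}, second sentence). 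Combined with the elementary Lemma~\ref{lemma: rank2} that the Kronecker quiver $\Q_b$ ($b\ge2$) has a \emph{unique} MGS (mutate $1$ then $2$), this immediately rules out any MGS for the $3$-cycle $\Q_{a,b,c}$ with $a,b,c\ge2$---even though every proper induced subquiver of $\Q_{a,b,c}$ \emph{does} admit an MGS, so your unrefined lemma alone would not see the obstruction here. Since $\Q_{2,2,3}$ is mutation-equivalent to an acyclic quiver, the counterexample lives on three vertices with no search required.

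A small geometric correction: the passage from $\D(\Q)$ to $\D(\Q^\dagger)$ is via \emph{projection}, not slicing. The paper shows that the pullback $\pi^*\D(\Q^\dagger)$ along the coordinate projection $\pi:\mathbb{R}^r\to\mathbb{R}^V$ coincides with the reduction $\D(\Q)/\langle y_i : i\notin V\rangle$, and the induced MGS on $\Q^\dagger$ arises by projecting the finite transverse path $p$ to $\pi\circ p$. Your worry about spurious wall interactions from vertices outside $V$ is resolved precisely by this pullback-equals-reduction identity.
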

One purpose of this note is to provide a counterexample to this conjecture.

\subsection{Induced subquivers}

Given a subset $V$ of the vertices of a quiver $\Q$, the \textbf{induced subquiver} $\Q^\dagger$ is the quiver with vertex set $V$ and arrow set consisting of arrows in $\Q$ between pairs of vertices in $V$.  

The key tool in this note is the following theorem.

\begin{thm}\label{thm: induced}
If a quiver $\Q$ admits a maximal green sequence, then any induced subquiver $\Q^\dagger$ admits a maximal green sequence.  
If a maximal green sequence for $\Q$ begins with a sequence of mutations on vertices in $\Q^\dagger$, then there is a maximal green sequence for $\Q^\dagger$ which begins with the same sequence of mutations.
\end{thm}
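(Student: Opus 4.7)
The plan is to use the cluster scattering diagram $\mathfrak{D}_\Q$ of Gross-Hacking-Keel-Kontsevich \cite{GHKK}, a locally finite collection of codimension-one walls in $\RR^N$ whose \emph{cluster chambers} are in bijection with the $g$-seeds mutation equivalent to the initial seed. In this dictionary, the positive orthant $\RR_{\geq 0}^N$ corresponds to the initial (all-green) $g$-seed, the negative orthant $\RR_{\leq 0}^N$ corresponds to the all-red $g$-seed (when it exists), and adjacent cluster chambers differ by a single mutation. A maximal green sequence for $\Q$ is then exactly a sequence of cluster chambers $C_0, C_1, \ldots, C_m$ starting at $\RR_{\geq 0}^N$ and ending at $\RR_{\leq 0}^N$, where each transition $C_i \to C_{i+1}$ crosses a wall in the ``green-to-red'' direction (i.e., the mutation is at a green vertex).

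The main technical input is a \textbf{restriction property}: for $\Q^\dagger$ the induced subquiver on $V$, the scattering diagram $\mathfrak{D}_{\Q^\dagger} \subset \RR^V$ is obtained from $\mathfrak{D}_\Q$ by slicing along the inclusion $\RR^V \hookrightarrow \RR^N$. Concretely, each wall $W$ of $\mathfrak{D}_\Q$ with $W \cap \RR^V$ of codimension one in $\RR^V$ restricts to a wall of $\mathfrak{D}_{\Q^\dagger}$ (with the wall-crossing automorphism inherited), and every wall of $\mathfrak{D}_{\Q^\dagger}$ arises this way. This is a consequence of the GHKK construction: the Lie algebra of derivations generating $\mathfrak{D}_\Q$ contains as a Lie subalgebra the one generating $\mathfrak{D}_{\Q^\dagger}$, and the consistency conditions restrict accordingly. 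Walls arising from $V$-vertex mutations slice nontrivially, while walls arising from non-$V$ mutations have normals annihilating $\RR^V$, contain $\RR^V$, and so contribute nothing to the slice.

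Using this, I define a map $\pi$ on cluster chambers of $\Q$: for $C$ with $C \cap \RR^V$ full-dimensional in $\RR^V$, let $\pi(C)$ be the unique $\Q^\dagger$-cluster-chamber containing the interior of $C \cap \RR^V$. This is well-defined because no wall of $\mathfrak{D}_{\Q^\dagger}$---being the slice of a $\mathfrak{D}_\Q$-wall disjoint from the interior of $C$---can cut through $C \cap \RR^V$. The positive and negative orthants of $\RR^N$ slice to the corresponding orthants of $\RR^V$, so $\pi$ sends positive to positive and negative to negative. For a $\Q$-mutation step $C_i \to C_{i+1}$ at a vertex $k$: if $k \in V$, the corresponding wall slices to a $\mathfrak{D}_{\Q^\dagger}$-wall and $\pi(C_{i+1})$ is obtained from $\pi(C_i)$ by a green mutation at $k$ (the green-to-red orientation is inherited by the slice); if $k \notin V$, the wall contains $\RR^V$ and $\pi(C_i) = \pi(C_{i+1})$.

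Applying $\pi$ to a MGS $C_0, \ldots, C_m$ for $\Q$ yields a sequence from $\RR_{\geq 0}^V$ to $\RR_{\leq 0}^V$ whose consecutive terms are either equal or related by a green mutation; collapsing repeated terms produces a MGS for $\Q^\dagger$. For the second statement, if the first $r$ mutations of the $\Q$-MGS are at $V$-vertices, each corresponding wall crossing contributes a nontrivial step under $\pi$, so the resulting MGS for $\Q^\dagger$ begins with precisely those mutations. The main obstacle is establishing the restriction property itself, which requires unpacking enough of the GHKK construction to track how walls slice under an induced-subquiver inclusion; a secondary point is verifying that $\pi$ is well-defined along the entire MGS (each $C_i \cap \RR^V$ must be full-dimensional), which should follow from the connectivity of the cluster fan along the MGS and the fact that both endpoint orthants meet $\RR^V$ in full dimension.
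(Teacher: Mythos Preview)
Your overall plan---encode a maximal green sequence as a path in the cluster scattering diagram $\mathfrak{D}(\Q)$ and transport it to $\mathfrak{D}(\Q^\dagger)$---matches the paper's strategy, but the key technical relationship between the two diagrams is set up incorrectly. You propose to obtain $\mathfrak{D}(\Q^\dagger)$ by \emph{slicing} $\mathfrak{D}(\Q)$ along the coordinate inclusion $\RR^V\hookrightarrow\RR^N$; the paper instead \emph{projects} along $\pi:\RR^N\to\RR^V$. Concretely, the paper shows that the pullback $\pi^*\mathfrak{D}(\Q^\dagger)$ equals the reduction $\mathfrak{D}(\Q)/\langle y_i : i\notin V\rangle$: one deletes from $\mathfrak{D}(\Q)$ every wall whose normal $n$ has a nonzero coordinate outside $V$, and what remains is precisely the set of preimages of walls of $\mathfrak{D}(\Q^\dagger)$. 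One then projects the finite transverse path $p$ realizing the MGS to the path $\pi\circ p$ in $\RR^V$, which starts in the all-positive chamber, ends in the all-negative chamber, and crosses walls only green-to-red.

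Your slicing formulation fails on two points. First, the ``restriction property'' as you state it is false: $\mathfrak{D}(\Q)$ has many walls $(n,W)$ with $n$ supported on \emph{both} $V$ and its complement (the normal of a cluster wall is a $c$-vector, which after a few mutations typically has full support), and such walls neither annihilate $\RR^V$ nor disappear in the slice---they meet $\RR^V$ in codimension one and would produce extra walls not present in $\mathfrak{D}(\Q^\dagger)$. Your dichotomy ``walls arising from $V$-vertex mutations'' versus ``non-$V$ mutations'' does not reflect how wall normals behave. Second, your chamber map $C\mapsto C\cap\RR^V$ is not well-defined along an arbitrary MGS: an intermediate cluster chamber $C_i$ is a simplicial cone on $g$-vectors that need not meet the subspace $\RR^V$ in full dimension at all (already the open positive orthant meets $\RR^V$ only on its boundary). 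The projection approach avoids both problems: $\pi\circ p$ is always a path in $\RR^V$, and the walls of $\mathfrak{D}(\Q^\dagger)$ it crosses are exactly those walls of $\mathfrak{D}(\Q)$ with normal supported on $V$ that $p$ crosses, with the green and red sides preserved because such normals pair with $p'(t)$ through $\pi$ alone.
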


The proof of theorem uses the theory of \emph{scattering diagrams}, developed in \cite{GPS10}, \cite{KS13}, \cite{GHK11} and connected to cluster algebras in \cite{GHKK}.  Since the relevant theory is elaborate and self-contained, the proof of Theorem \ref{thm: induced} will be deferred to Section \ref{section: proofs}.

\section{Existence and non-existence of maximal green sequences}

This section collects several results about maximal green sequences, culminating in the promised counterexample.

\subsection{Acyclic quivers}

A quiver is \textbf{acyclic} if there are no directed cycles.  Acyclic quivers are the most straightforward class of quivers with a maximal green sequence.

\begin{prop}\cite[Lemma 2.20]{BDP12}\label{thm: acyclic}
An acyclic quiver $\Q$ admits a maximal green sequence.
\end{prop}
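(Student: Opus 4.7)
The plan is to exhibit an explicit maximal green sequence built from a topological ordering of $\Q$. Since $\Q$ is acyclic, we may index the vertices $1, 2, \ldots, N$ so that every arrow $i \to j$ in $\Q$ satisfies $i < j$; in particular, vertex $1$ is a source. The claim to establish is that the sequence of mutations $\mu_1, \mu_2, \ldots, \mu_N$ applied to the initial $g$-seed $(\Q, \{e_k\})$ constitutes a maximal green sequence.

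The argument proceeds by induction on $k$, with the inductive hypothesis that after performing $\mu_1, \ldots, \mu_k$, the resulting $g$-seed satisfies $g_i = -e_i$ for $i \leq k$ and $g_i = e_i$ for $i > k$, and (if $k < N$) vertex $k+1$ is a source in the current underlying quiver. The base case $k = 0$ is immediate from the topological ordering. The key observation driving the induction step is that mutation at a source is essentially trivial: no paths run through a source vertex, so step (1) of quiver mutation adds no arrows, step (3) cancels nothing, and the only net effect is that step (2) reverses the arrows incident to that vertex.

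For the induction step, first note that the basis $\{-e_1, \ldots, -e_k, e_{k+1}, \ldots, e_N\}$ expresses an arbitrary $v \in \NN^N$ with coefficients $-v_i \leq 0$ for $i \leq k$ and $v_i \geq 0$ for $i > k$, so vertex $k+1$ is green. Since $k+1$ is a source in the current quiver, the update rule yields $g_{k+1} \mapsto -e_{k+1} + \sum_{j \to k+1} g_j = -e_{k+1}$ (the sum being empty), and the quiver change is merely that all arrows at $k+1$ are reversed. Consequently, vertex $k+2$'s incident arrows in the new quiver consist of its original outgoing arrows together with the newly reversed arrows $k+2 \to i$ for $i \leq k+1$---all pointing out---so $k+2$ is a source. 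This completes the induction. After $N$ steps every $g$-vector equals $-e_i$, hence every vertex is red, and a maximal green sequence has been produced.

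There is essentially no main obstacle here: the proof reduces to the clean behavior of quiver mutation at a source in an acyclic quiver. The only care required is confirming greenness from the explicit $g$-vector basis at each step, which is immediate.
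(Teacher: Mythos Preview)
Your proof is correct and matches the approach the paper indicates: the paper does not give a detailed argument but simply remarks that ``a maximal green sequence can always be given by iteratively mutating at source vertices which have not yet been mutated,'' which is exactly the topological-order source mutation you carry out. Your inductive bookkeeping of the $g$-vectors and of the quiver structure (each $\mu_{k+1}$ is at a source, so no arrows are created and $g_{k+1}$ becomes $-e_{k+1}$) fills in the details cleanly.
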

\noindent In fact, a maximal green sequence can always be given by iteratively mutating at source vertices which have not yet been mutated.

\subsection{Quivers with $2$ vertices}

Let $b$ be a positive integer, and let $\Q_{b}$ denote the quiver on two vertices with $b$-many arrows from $1$ to $2$ (Figure \ref{fig: b}).

\begin{figure}[h!t]	
	\begin{tikzpicture}
	\begin{scope}
		\node[mutable] (1) at (-1,0) {$1$};
		\node[mutable] (2) at (1,0) {$2$};
		\draw[-angle 90] (1) to node[above] {$b$} (2);
	\end{scope}
	\end{tikzpicture}
\caption{The quiver $\Q_{b}$}
\label{fig: b}
\end{figure}
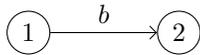

Every quiver of the form $\Q_b$ is acyclic, and thus it admits a maximal green sequence by iteratively mutating at sources.  The following lemma asserts that this is the only maximal green sequence.

\begin{lemma}\label{lemma: rank2}
If $b\geq2$, then the only maximal green sequence for $\Q_b$ is mutation at vertex $1$ and then vertex $2$.
\end{lemma}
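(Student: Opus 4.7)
The plan is to exhaust the two possibilities for the opening mutation of the initial $g$-seed $(\Q_b, \{e_1, e_2\})$, both of whose vertices are green. First, I verify that the sequence $1, 2$ is a maximal green sequence: mutation at $1$ yields $g_1 = -e_1$, $g_2 = e_2$ and reverses the arrows to $2\to 1$; computing $C = G^{-1}$ shows vertex $1$ is red and vertex $2$ is still green. Mutation at $2$ then sends $g_2 \mapsto -e_2$, because no arrows in the current quiver point into vertex $2$, producing the all-red seed with $g$-vectors $\{-e_1, -e_2\}$.

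The bulk of the proof rules out any maximal green sequence beginning with mutation at vertex $2$. This opening mutation produces $g_1 = e_1$, $g_2 = (b, -1)$, with $c$-vectors $(1, b)$ (green) and $(0, -1)$ (red), so vertex $1$ is the unique green vertex and must be mutated next. I plan to show by induction that every subsequent $g$-seed in this branch has exactly one green vertex, forcing the sequence of mutations to be $2, 1, 2, 1, \ldots$. The mutation formula, together with the fact that the underlying quiver alternates between $\Q_b$ and its reverse, simplifies the update rule to $g_k^{\text{new}} = -g_k^{\text{old}} + b\, g_{3-k}^{\text{old}}$; tracking only first coordinates yields the Chebyshev-like recurrence $p_{n+1} = b\, p_n - p_{n-1}$ with $p_0 = 1$, $p_1 = b$.

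For $b \geq 2$, a one-line induction shows that the $p_n$ are strictly positive and non-decreasing, so every $g$-vector appearing in this branch has strictly positive first coordinate. By Theorem \ref{thm: greenred}, an all-red $g$-seed in the mutation class of $(\Q_b, \{e_1, e_2\})$ must have $g$-vectors $\{-e_1, -e_2\}$, whose first coordinates are non-positive; hence this branch never reaches an all-red $g$-seed, as required. The main obstacle I expect is verifying that the alternating mutation sequence in the second branch is genuinely forced, i.e.\ that sign coherence pins down exactly one green vertex at each step rather than zero greens (which would give an unexpected all-red seed) or two greens (which would allow a branching choice). This is handled by the same recursion, which also controls the $c$-vectors: after each mutation the non-just-mutated vertex has a $c$-vector whose two entries are (up to sign) consecutive terms of the $p_n$ sequence and therefore of a consistent, positive sign, so that vertex is forced to be green.
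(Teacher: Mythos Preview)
Your proposal is correct and follows essentially the same route as the paper. Both arguments verify that the branch starting at vertex $1$ is forced to be $1,2$, and then show that the branch starting at vertex $2$ is forced into the alternating sequence $2,1,2,1,\ldots$, whose $g$-vectors are governed by the Chebyshev-type recursion $p_{n+1}=bp_n-p_{n-1}$ (your $p_n$ is the paper's $F_{n+1}$). The only stylistic difference is that the paper tracks both coordinates of the $g$-vectors, obtaining $(F_i,-F_{i-1})$ and $(F_{i+1},-F_i)$ and reading off the colours directly, whereas you track only the first coordinate and invoke Theorem~\ref{thm: greenred} to rule out an all-red seed (since the first coordinates stay positive). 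Your final paragraph, noting that the $c$-vectors of the non-just-mutated vertex are also consecutive Chebyshev values with a common sign, is exactly the computation the paper carries out implicitly; either route closes the ``forced alternation'' loop without circularity.
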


\begin{proof}
It is easy to check that mutation at $1$ and then $2$ is a maximal green sequence, and that it is the only maximal green sequence beginning with mutation at $1$.

Consider the mutation of an initial $g$-seed with quiver $\Q_b$ at $2$ (Figure \ref{fig: rank2mutate}).  
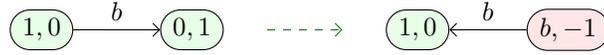
\begin{figure}[h!t]	
	\begin{tikzpicture}
	\begin{scope}
		\node[rounded rectangle,draw,fill=green!10] (1) at (-1,0) {$1,0$};
		\node[rounded rectangle,draw,fill=green!10] (2) at (1,0) {$0,1$};
		\draw[-angle 90] (1) to node[above] {$b$} (2);
	\end{scope}
	\draw[black!50!green,-angle 90,dashed] (2,0) to (3,0);
	\begin{scope}[xshift=5cm]
		\node[rounded rectangle,draw,fill=green!10] (1) at (-1,0) {$1,0$};
		\node[rounded rectangle,draw,fill=red!10] (2) at (1,0) {$b,-1$};
		\draw[angle 90-] (1) to node[above] {$b$} (2);
	\end{scope}
	\end{tikzpicture}
\caption{Mutating an initial seed at vertex $2$}
\label{fig: rank2mutate}
\end{figure}

Define polynomials $F_i$ of $b$ for all $i\in \NN$ recursively, by\footnote{Up to reindexing and a change of variables, these are the \emph{Chebyshev polynomials of the second kind.}}
\[ F_0 = 0,\;\;\; F_1 = 1\]
\[ F_{i+1} = bF_i -F_{i-1} \]

We now make the following claim.  For $i\geq1$, after a sequence of $i$-many green mutation starting at $2$, the resulting $g$-seed is isomorphic to the $g$-seed in Figure \ref{fig: rank2seed}.
\begin{figure}[h!t]
\begin{tikzpicture}
\begin{scope}
	\node[rounded rectangle,draw,fill=green!10] (1) at (-1,0) {$F_i,-F_{i-1}$};
	\node[rounded rectangle,draw,fill=red!10] (2) at (2,0) {$F_{i+1},-F_i$};
	\draw[angle 90-] (1) to node[above] {$b$} (2);
\end{scope}
\end{tikzpicture}
\caption{The $g$-seed after $i$ green mutations starting at $2$}
\label{fig: rank2seed}
\end{figure}
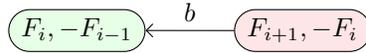

Figure \ref{fig: rank2mutate} establishes the claim for $i=1$.  Assume the claim holds for $i$-many mutations starting at $2$.  The $g$-seed in Figure \ref{fig: rank2seed} has a unique green vertex, so a sequence of $(i+1)$-many green mutations starting at $2$ must come from mutation the this seed at the green vertex.  The resulting seed, shown in Figure \ref{fig: rank2mutate2}, is of the desired form.  
\begin{figure}[h!t]	
	\begin{tikzpicture}
	\begin{scope}
	\node[rounded rectangle,draw,fill=green!10] (1) at (-1.5,0) {$F_i,-F_{i-1}$};
	\node[rounded rectangle,draw,fill=red!10] (2) at (1.5,0) {$F_{i+1},-F_i$};
	\draw[angle 90-] (1) to node[above] {$b$} (2);
	\end{scope}
	\draw[black!50!green,-angle 90,dashed] (1.5,-.666) to (3.5,-1.333);
	\begin{scope}[xshift=5cm, yshift=-2cm]
	\node[rounded rectangle,draw,fill=red!10] (1) at (-2,0) {$bF_{i+1}-F_i,-bF_i+F_{i-1}$};
	\node[rounded rectangle,draw,fill=green!10] (2) at (2,0) {$F_{i+1},-F_i$};
	\draw[-angle 90] (1) to node[above] {$b$} (2);
	\end{scope}
	\end{tikzpicture}
\caption{Mutating the $g$-seed in Figure \ref{fig: rank2seed} at the green vertex}
\label{fig: rank2mutate2}
\end{figure}
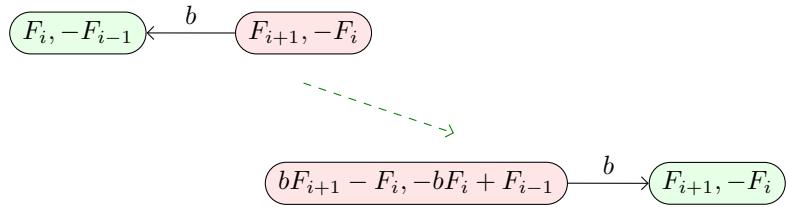

By induction, the claim holds for all $i$.  In particular, no sequence of green mutations starting at $2$ produces a $g$-seed with all vertices red, and so there are no maximal green sequences starting at $2$.
%
\end{proof}

\subsection{Quivers with $3$ vertices}

Let $a,b,c$ be non-negative integers, and let $\Q_{a,b,c}$ denote the quiver on $3$ vertices with $a$-many arrows from $1$ to $2$, $b$-many arrows from $2$ to $3$, and $c$-many arrows from $3$ to $1$ (Figure \ref{fig: abc}).

\begin{figure}[h!t]	
	\begin{tikzpicture}
	\begin{scope}
		\path[use as bounding box] (-1,-1) rectangle  (1,1);
		\node[mutable] (1) at (90:1) {$1$};
		\node[mutable] (2) at (-30:1) {$2$};
		\node[mutable] (3) at (210:1) {$3$};
		\draw[-angle 90,relative, out=15,in=165] (1) to node[above right] {$a$} (2);
		\draw[-angle 90,relative, out=15,in=165] (2) to node[below] {$b$} (3);
		\draw[-angle 90,relative, out=15,in=165] (3) to node[above left] {$c$} (1);
	\end{scope}
	\end{tikzpicture}
\caption{The quiver $\Q_{a,b,c}$}
\label{fig: abc}

\end{figure}
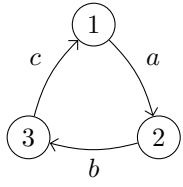

\begin{thm}
If $a,b,c\geq2$, then the quiver $\Q_{a,b,c}$ does not admit a maximal green sequence.
\end{thm}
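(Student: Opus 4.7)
The plan is to assume, for contradiction, that $\Q_{a,b,c}$ admits a maximal green sequence and then to derive a contradiction by combining Theorem \ref{thm: induced} with Lemma \ref{lemma: rank2}, applied to a carefully chosen two-vertex induced subquiver.

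The first observation is that each two-vertex induced subquiver of $\Q_{a,b,c}$ is of the form $\Q_e$ with $e \in \{a,b,c\}$: the pair $\{1,2\}$ induces $\Q_a$ (arrows $1 \to 2$), the pair $\{2,3\}$ induces $\Q_b$ (arrows $2 \to 3$), and the pair $\{1,3\}$ induces $\Q_c$ (arrows $3 \to 1$). Since $a,b,c \geq 2$, Lemma \ref{lemma: rank2} applies in each case: the only maximal green sequence for any such subquiver is the two-step sequence beginning with mutation at the \emph{source}.

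Now let $k$ be the vertex at which the hypothetical maximal green sequence for $\Q_{a,b,c}$ performs its first mutation, and consider the unique two-vertex induced subquiver $\Q^\dagger$ in which $k$ is a \emph{sink} rather than a source: explicitly, $\Q^\dagger$ is the subquiver on $\{1,3\}$ if $k=1$, on $\{1,2\}$ if $k=2$, and on $\{2,3\}$ if $k=3$. By the second part of Theorem \ref{thm: induced}, $\Q^\dagger$ admits a maximal green sequence that begins with mutation at $k$. But $\Q^\dagger$ is of the form $\Q_e$ with $e \geq 2$, and $k$ is not the source of $\Q^\dagger$, contradicting Lemma \ref{lemma: rank2}. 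Hence no maximal green sequence for $\Q_{a,b,c}$ can exist.

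Since Theorem \ref{thm: induced} and Lemma \ref{lemma: rank2} do all the work, there is no substantive obstacle to the proof itself; the only genuine step is identifying the correct induced subquiver for each possible starting vertex, which is forced by the cyclic orientation of $\Q_{a,b,c}$. The whole argument reduces to the observation that the first mutation of any putative maximal green sequence must occur at a vertex that is a source in every two-vertex induced subquiver containing it, and in $\Q_{a,b,c}$ no such vertex exists.
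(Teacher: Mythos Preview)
Your proof is correct and follows essentially the same approach as the paper: for each possible first vertex $k$, pick the two-vertex induced subquiver in which $k$ is a sink, apply the second part of Theorem \ref{thm: induced}, and contradict Lemma \ref{lemma: rank2}. The paper's proof carries out the case $k=2$ explicitly (using the subquiver on $\{1,2\}$) and then invokes symmetry, whereas you spell out all three cases; the content is identical.
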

\begin{proof}
Assume there is a maximal green sequence which starts by mutating at the vertex $2$.  By Theorem \ref{thm: induced}, this determines a maximal green sequence for the induced subquiver on the vertices $\{1,2\}$ which starts by mutating at vertex $2$, which is impossible by Lemma \ref{lemma: rank2}.  By symmetric arguments, a maximal green sequence cannot start with $1$ or $3$.
%
%
\end{proof}

However, there are many quivers of the form $\Q_{a,b,c}$ which are mutation equivalent to an acyclic quiver.

\begin{lemma}\cite[Theorem 1.1]{BBH11}\label{thm: nonacyclicrank3}
The quiver $\Q_{a,b,c}$ is mutation-equivalent to an acyclic quiver if and only if
\[abc-a^2-b^2-c^2+4< 0\text{ or } \min(a,b,c)<2\]
\end{lemma}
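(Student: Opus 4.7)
The argument rests on two direct computations. First, mutating $\Q_{a,b,c}$ at vertex $2$ reverses the arrows incident to $2$ and creates $ab$ arrows $1\to 3$ which cancel with the $c$ existing arrows $3\to 1$, yielding an acyclic quiver when $ab\leq c$ and a $3$-cycle with weight multiset $\{a,b,ab-c\}$ when $ab>c$; by symmetry, analogous statements hold at vertices $1$ and $3$.  Second, a short substitution shows that the quantity
\[ \Delta(a,b,c) := abc - a^2 - b^2 - c^2 + 4 \]
is invariant under $(a,b,c)\mapsto(a,b,ab-c)$ and its symmetric counterparts, so $\Delta$ is constant along any sequence of mutations that stays within $3$-cycle quivers.

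For the $(\Leftarrow)$ direction, the case $\min(a,b,c)<2$ is handled by hand: if some weight is $0$, the quiver is already acyclic; if (say) $a=1$, then mutation at vertex $1$ is acyclic when $c<b$ (since $ac-b<0$), while mutation at vertex $2$ is acyclic when $b\leq c$ (since $ab=b\leq c$).  In the remaining case $\Delta<0$ with $a,b,c\geq 2$, I reorder so $a\leq b\leq c$ and view $\Delta$ as a downward parabola in $c$, with vertex at $c=ab/2$ and roots $c_\pm=\tfrac12(ab\pm\sqrt{(a^2-4)(b^2-4)})$.  The factorization
\[ \Delta(a,b,b) = (a-2)(b^2-a-2) \]
is nonnegative for $a,b\geq 2$ with $a\leq b$, which shows $b\in[c_-,c_+]$ and hence $c\geq b\geq c_-$.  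Thus $\Delta(a,b,c)<0$ forces $c>c_+\geq ab/2$, so $ab-c<c$.  Mutation at vertex $2$ now either returns an acyclic quiver or a new $3$-cycle with strictly smaller total weight; after reordering, either $\min<2$ (handled above) or the induction hypothesis applies on $a+b+c$ (using $\Delta$-invariance to preserve $\Delta<0$).

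For the $(\Rightarrow)$ direction, shown by contrapositive, assume $a,b,c\geq 2$ and $\Delta\geq 0$.  It suffices to prove that every mutation again gives a $3$-cycle with all weights $\geq 2$; then $\Delta$-invariance keeps the orbit away from the acyclic region forever.  By symmetry it is enough to show $ab-c\geq 2$ for the mutation at vertex $2$.  If instead $c\geq ab-1$, a short case analysis gives a contradiction: when $c=ab-1$, substitution gives $\Delta=ab-a^2-b^2+3$, which one checks is at most $-1$ for all $a,b\geq 2$; when $c\geq ab$, then $c(ab-c)\leq 0$ forces $\Delta\leq -a^2-b^2+4\leq -4$.

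The main technical point is the parabola analysis in the $(\Leftarrow)$ induction: one must rule out the possibility $c<c_-$ when $\Delta(c)<0$, so that the reduction $ab-c<c$ actually holds.  This is exactly what the factorization $\Delta(a,b,b)=(a-2)(b^2-a-2)$ delivers, since it pins $b$ into the interval of non-negativity of $\Delta(\cdot)$ and so forces $c>c_+$ rather than $c<c_-$.
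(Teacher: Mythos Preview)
The paper does not give its own proof of this lemma; it simply quotes the result from \cite{BBH11}. Your argument is therefore not being compared against anything in the present paper, but it is a correct, self-contained proof.

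The approach you take---the invariance of the Markov-type quantity $\Delta(a,b,c)=abc-a^2-b^2-c^2+4$ under the Vieta-jumping moves $(a,b,c)\mapsto(a,b,ab-c)$, together with descent on the total weight---is the standard one, and essentially the same as in \cite{BBH11}. A couple of points worth making explicit: the reordering to $a\leq b\leq c$ is legitimate because any permutation of the weight multiset yields a quiver related to $\Q_{a,b,c}$ by relabelling vertices and possibly reversing all arrows, both of which preserve the property of being mutation-equivalent to an acyclic quiver. In the contrapositive direction, the single inequality $ab-c\geq 2$ does suffice by symmetry, since the three mutations replace one weight by the product of the other two minus itself, and both the hypotheses and $\Delta$ are symmetric in $a,b,c$. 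The factorisation $\Delta(a,b,b)=(a-2)(b^2-a-2)$ is the key step that forces $c>c_+$ rather than $c<c_-$, and your use of it is clean.
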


Combining the last two results provides many counter-examples to Conjecture \ref{conj: MGS}.
\begin{coro}
Let $a,b,c\geq2$ with $abc-a^2-b^2-c^2+4< 0$.  Then $\Q_{a,b,c}$ does not admit a maximal green sequence, but is mutation equivalent to a quiver which admits a maximal green sequence.
\end{coro}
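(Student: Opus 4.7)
The plan is to assemble the corollary directly from three ingredients that have already been established in the excerpt: the theorem immediately preceding it, Lemma \ref{thm: nonacyclicrank3}, and Proposition \ref{thm: acyclic}. No new arguments should be required; the work is purely to verify that the hypotheses of each ingredient hold.

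First, I would invoke the previous theorem with the hypothesis $a,b,c\geq 2$ to conclude that $\Q_{a,b,c}$ does not admit a maximal green sequence. This gives the first half of the conclusion essentially for free.

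Next, I would feed the inequality $abc-a^2-b^2-c^2+4<0$ into Lemma \ref{thm: nonacyclicrank3}, which yields that $\Q_{a,b,c}$ is mutation equivalent to some acyclic quiver $\Q'$. Finally, Proposition \ref{thm: acyclic} applied to $\Q'$ produces a maximal green sequence (obtained, as noted there, by iteratively mutating at not-yet-mutated sources). Thus the mutation equivalence class of $\Q_{a,b,c}$ contains a quiver with a maximal green sequence, even though $\Q_{a,b,c}$ itself has none, which is exactly the desired counterexample to Conjecture \ref{conj: MGS}.

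There is no substantive obstacle here; the only minor point worth mentioning is that the hypothesis $a,b,c\geq 2$ is used only to apply the preceding theorem, while the strict inequality $abc-a^2-b^2-c^2+4<0$ is used only to select the correct disjunct of Lemma \ref{thm: nonacyclicrank3}. One could also remark, for concreteness, that the triple $(a,b,c)=(2,2,3)$ satisfies $abc - a^2 - b^2 - c^2 + 4 = 12 - 4 - 4 - 9 + 4 = -1 < 0$, recovering the quiver of Figure \ref{fig: 223} as a specific instance of the corollary.
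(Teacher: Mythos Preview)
Your proposal is correct and matches the paper's approach exactly: the paper simply states that ``combining the last two results provides many counter-examples,'' which is precisely your assembly of the preceding theorem, Lemma~\ref{thm: nonacyclicrank3}, and Proposition~\ref{thm: acyclic}. Your additional remark about $(2,2,3)$ also mirrors the paper's choice of explicit example.
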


The simplest such quiver is $\Q_{2,2,3}$.  For the benefit of the reader, Figure \ref{fig: toacyclic} shows a sequence of quiver mutations relating $\Q_{2,2,3}$ to the acyclic quiver $\Q_{0,2,1}$, and Figure \ref{fig: maxgreen} shows a maximal green sequence for quiver $\Q_{0,2,1}$.

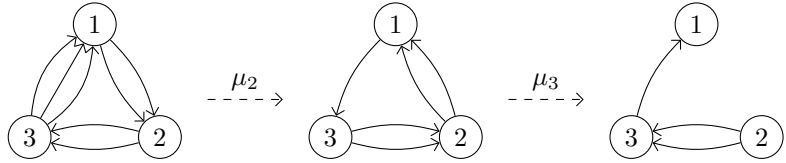
\begin{figure}[h!t]	
	\begin{tikzpicture}
	\begin{scope}
		\path[use as bounding box] (-1,-.5) rectangle  (1,1);
		\node[mutable] (1) at (90:1) {$1$};
		\node[mutable] (2) at (-30:1) {$2$};
		\node[mutable] (3) at (210:1) {$3$};
		\draw[-angle 90,relative, out=15,in=165] (1) to (2);
		\draw[-angle 90,relative, out=-15,in=-165] (1) to (2);
		\draw[-angle 90,relative, out=15,in=165] (2) to (3);
		\draw[-angle 90,relative, out=-15,in=-165] (2) to (3);
		\draw[-angle 90,relative, out=25,in=155] (3) to (1);
		\draw[-angle 90,relative] (3) to (1);
		\draw[-angle 90,relative, out=-25,in=-155] (3) to (1);
	\end{scope}
	\draw[-angle 90,dashed] (1.5,0) to node[above] {$\mu_2$} (2.5,0);
	\begin{scope}[xshift=4cm]
		\path[use as bounding box] (-1,-.5) rectangle  (1,1);
		\node[mutable] (1) at (90:1) {$1$};
		\node[mutable] (2) at (-30:1) {$2$};
		\node[mutable] (3) at (210:1) {$3$};
		\draw[angle 90-,relative, out=15,in=165] (1) to (2);
		\draw[angle 90-,relative, out=-15,in=-165] (1) to (2);
		\draw[angle 90-,relative, out=15,in=165] (2) to (3);
		\draw[angle 90-,relative, out=-15,in=-165] (2) to (3);
		\draw[angle 90-,relative, out=15,in=165] (3) to (1);
	\end{scope}
	\draw[-angle 90,dashed] (5.5,0) to node[above] {$\mu_3$} (6.5,0);
	\begin{scope}[xshift=8cm]
		\path[use as bounding box] (-1,-.5) rectangle  (1,1);
		\node[mutable] (1) at (90:1) {$1$};
		\node[mutable] (2) at (-30:1) {$2$};
		\node[mutable] (3) at (210:1) {$3$};
		\draw[-angle 90,relative, out=15,in=165] (2) to (3);
		\draw[-angle 90,relative, out=-15,in=-165] (2) to (3);
		\draw[-angle 90,relative, out=15,in=165] (3) to (1);
	\end{scope}
	\end{tikzpicture}
\caption{The quiver $\Q_{2,2,3}$ is mutation equivalent to $\Q_{0,2,1}$}
\label{fig: toacyclic}
\end{figure}

\section{Green-to-red sequences}

\subsection{Green-to-red sequences}

This section considers a weaker property than the existence of a maximal green sequence, which has some of the properties expected of maximal green sequences.

\begin{defn}
A \textbf{green-to-red sequence} for a quiver $\Q$ is a sequence of mutations which takes an initial $g$-seed with quiver $\Q$ to a $g$-seed with all vertices red.
\end{defn}
\noindent By Theorem \ref{thm: greenred}, the final $g$-seed in a green-to-red sequence has quiver $\Q$, and all $g$-vectors of the form $-e_k$.  The existence of a green-to-red sequence is equivalent to the existence of an all-red $g$-seed which is mutation-equivalent to an initial $g$-seed on $\Q$.

A maximal green sequence is clearly a green-to-red sequence.  However, a quiver may admit a green-to-red sequence but not admit a maximal green sequence.  In particular, $\Q_{2,2,3}$ admits a green-to-red sequence (Figure \ref{fig: greentored}).

\begin{figure}[h!t]	
	\begin{tikzpicture}
	\begin{scope}[xshift=2cm]
		\path[use as bounding box] (-1,-.5) rectangle  (1,1);
		\node[rounded rectangle,draw,fill=green!10] (1) at (90:1) {$1,0,0$};
		\node[rounded rectangle,draw,fill=green!10] (2) at (-25:1) {$0,1,0$};
		\node[rounded rectangle,draw,fill=green!10] (3) at (205:1) {$0,0,1$};
		\draw[-angle 90,relative, out=15,in=165] (1) to (2);
		\draw[-angle 90,relative, out=-15,in=-165] (1) to (2);
		\draw[-angle 90,relative, out=15,in=165] (2) to (3);
		\draw[-angle 90,relative, out=-15,in=-165] (2) to (3);
		\draw[-angle 90,relative, out=25,in=155] (3) to (1);
		\draw[-angle 90,relative] (3) to (1);
		\draw[-angle 90,relative, out=-25,in=-155] (3) to (1);
	\end{scope}
	\draw[black!50!green,-angle 90,dashed] (3.5,.25) to (4.5,.25);
	\begin{scope}[xshift=6cm]
		\path[use as bounding box] (-1,-.5) rectangle  (1,1);
		\node[rounded rectangle,draw,fill=green!10] (1) at (90:1) {$1,0,0$};
		\node[rounded rectangle,draw,fill=red!10] (2) at (-25:1) {$2,-1,0$};
		\node[rounded rectangle,draw,fill=green!10] (3) at (205:1) {$0,0,1$};
		\draw[angle 90-,relative, out=15,in=165] (1) to (2);
		\draw[angle 90-,relative, out=-15,in=-165] (1) to (2);
		\draw[angle 90-,relative, out=15,in=165] (2) to (3);
		\draw[angle 90-,relative, out=-15,in=-165] (2) to (3);
		\draw[angle 90-,relative, out=15,in=165] (3) to (1);
	\end{scope}
	\draw[black!50!green,-angle 90,dashed,out=0,in=45] (7.25,.25) to (9,-2.5);

	\begin{scope}[xshift=8cm,yshift=-3cm]
		\path[use as bounding box] (-1,-.5) rectangle  (1,1);
		\node[rounded rectangle,draw,fill=green!10] (1) at (90:1) {$1,0,0$};
		\node[rounded rectangle,draw,fill=red!10] (2) at (-25:1) {$2,-1,0$};
		\node[rounded rectangle,draw,fill=red!10] (3) at (205:1) {$1,0,-1$};
		\draw[-angle 90,relative, out=15,in=165] (2) to (3);
		\draw[-angle 90,relative, out=-15,in=-165] (2) to (3);
		\draw[-angle 90,relative, out=15,in=165] (3) to (1);
	\end{scope}
	\draw[black!50!red,-angle 90,dashed] (6.5,-2.75) to (5.5,-2.75);
	\begin{scope}[xshift=4cm,yshift=-3cm]
		\path[use as bounding box] (-1,-.5) rectangle  (1,1);
		\node[rounded rectangle,draw,fill=red!10] (1) at (90:1) {$0,0,-1$};
		\node[rounded rectangle,draw,fill=red!10] (2) at (-25:1) {$2,-1,0$};
		\node[rounded rectangle,draw,fill=green!10] (3) at (205:1) {$1,0,-1$};
		\draw[-angle 90,relative, out=15,in=165] (2) to (3);
		\draw[-angle 90,relative, out=-15,in=-165] (2) to (3);
		\draw[angle 90-,relative, out=15,in=165] (3) to (1);
	\end{scope}
	\draw[black!50!green,-angle 90,dashed] (2.5,-2.75) to (1.5,-2.75);
	\begin{scope}[xshift=0cm,yshift=-3cm]
		\path[use as bounding box] (-1,-.5) rectangle  (1,1);
		\node[rounded rectangle,draw,fill=red!10] (1) at (90:1) {$0,0,-1$};
		\node[rounded rectangle,draw,fill=green!10] (2) at (-25:1) {$0,1,-2$};
		\node[rounded rectangle,draw,fill=green!10] (3) at (205:1) {$1,0,-1$};
		\draw[angle 90-,relative, out=15,in=165] (2) to (3);
		\draw[angle 90-,relative, out=-15,in=-165] (2) to (3);
		\draw[angle 90-,relative, out=15,in=165] (3) to (1);
	\end{scope}
	\draw[black!50!green,-angle 90,dashed,out=-75,in=135] (0,-4.25) to (.75,-5.5);
	
	\begin{scope}[xshift=2cm,yshift=-6cm]
		\path[use as bounding box] (-1,-.5) rectangle  (1,1);
		\node[rounded rectangle,draw,fill=red!10] (1) at (90:1) {$0,0,-1$};
		\node[rounded rectangle,draw,fill=green!10] (2) at (-20:1) {$0,1,-2$};
		\node[rounded rectangle,draw,fill=red!10] (3) at (200:1) {$-1,0,0$};
		\draw[-angle 90,relative, out=15,in=165] (1) to (2);
		\draw[-angle 90,relative, out=-15,in=-165] (1) to (2);
		\draw[-angle 90,relative, out=15,in=165] (2) to (3);
		\draw[-angle 90,relative, out=-15,in=-165] (2) to (3);
		\draw[-angle 90,relative, out=15,in=165] (3) to (1);
	\end{scope}
	\draw[black!50!green,-angle 90,dashed] (3.5,-5.75) to (4.5,-5.75);
	\begin{scope}[xshift=6cm,yshift=-6cm]
		\path[use as bounding box] (-1,-.5) rectangle  (1,1);
		\node[rounded rectangle,draw,fill=red!10] (1) at (90:1) {$0,0,-1$};
		\node[rounded rectangle,draw,fill=red!10] (2) at (-20:1) {$0,-1,0$};
		\node[rounded rectangle,draw,fill=red!10] (3) at (200:1) {$-1,0,0$};
		\draw[angle 90-,relative, out=15,in=165] (1) to (2);
		\draw[angle 90-,relative, out=-15,in=-165] (1) to (2);
		\draw[angle 90-,relative, out=15,in=165] (2) to (3);
		\draw[angle 90-,relative, out=-15,in=-165] (2) to (3);
		\draw[angle 90-,relative, out=25,in=155] (3) to (1);
		\draw[angle 90-,relative] (3) to (1);
		\draw[angle 90-,relative, out=-25,in=-155] (3) to (1);
	\end{scope}
	
	\end{tikzpicture}
\caption{The quiver $\Q_{2,2,3}$ has green-to-red sequence}
\label{fig: greentored}
\end{figure}
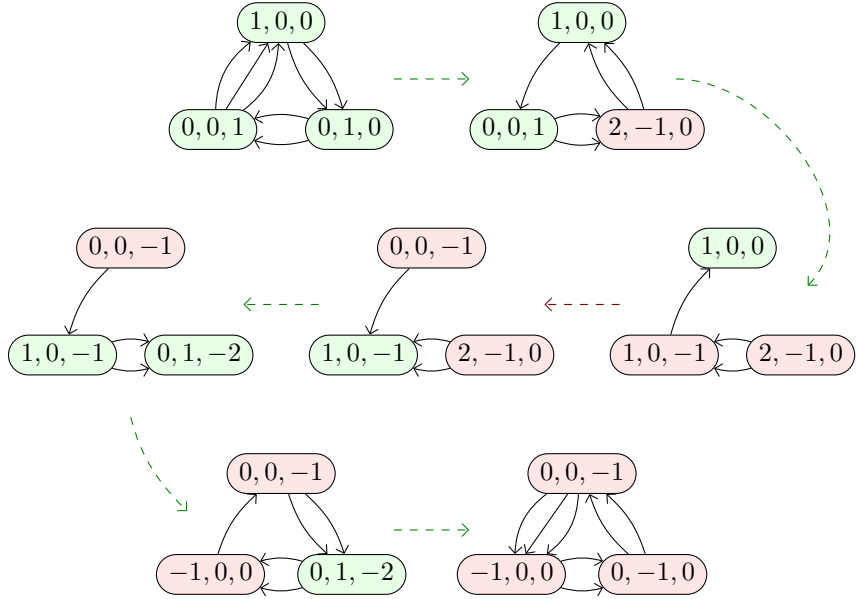

\begin{rem}
An equivalent condition to the existence of a green-to-red sequence appears in \cite[Prop.8.26]{GHKK}, together with important consequences for the scattering diagram (see Section \ref{section: gscattering} for the connection).
\end{rem}


Like maximal green sequences, the existence of a green-to-red sequence is preserved under passing to an induced subquiver.

\begin{thm} \label{thm: inducedg2r}
If a quiver $\Q$ admits a green-to-red sequence, then any induced subquiver $\Q^\dagger$ admits a green-to-red sequence.
\end{thm}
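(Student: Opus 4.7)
The plan is to adapt the scattering-diagram strategy used to establish Theorem~\ref{thm: induced} to the green-to-red setting. In one respect the adaptation is cleaner: a green-to-red sequence is determined by its start and end $g$-seeds alone, so we need only locate these two seeds in the mutation class and need not bookkeep the color of every intermediate vertex. This suggests that Theorem~\ref{thm: inducedg2r} should be a formal corollary of the same restriction principle that underlies Theorem~\ref{thm: induced}, without the finer ``stay-green'' control.

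The starting point is the reformulation indicated in the remark following the definition, coming from \cite[Prop.~8.26]{GHKK}: the quiver $\Q$ admits a green-to-red sequence if and only if the $g$-seed with quiver $\Q$ and $g$-vectors $-e_1,\ldots,-e_N$ lies in the mutation class of the initial $g$-seed, equivalently, if and only if the cluster complex of the cluster scattering diagram $\mathfrak{D}_\Q$ of \cite{GHKK} contains an all-red chamber (the image of the positive chamber under $v\mapsto -v$). Given this, the argument breaks into two steps. First, I would establish the restriction principle: for the induced subquiver $\Q^\dagger$ on a vertex subset $V\subseteq\{1,\ldots,N\}$, the cluster scattering diagram $\mathfrak{D}_{\Q^\dagger}$ is recovered from $\mathfrak{D}_\Q$ by intersecting with the coordinate subspace $\RR^V\subset\RR^N$, and this intersection sends cluster chambers to cluster chambers. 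This is the same compatibility statement used for Theorem~\ref{thm: induced}. Second, I would observe that this slicing carries the all-red chamber of $\mathfrak{D}_\Q$ to the all-red chamber of $\mathfrak{D}_{\Q^\dagger}$, essentially because the map $v\mapsto -v$ commutes with restriction to $\RR^V$. Combining the two steps shows that the all-red chamber of $\mathfrak{D}_{\Q^\dagger}$ lies in the cluster complex of $\Q^\dagger$, and the reformulation then delivers the desired green-to-red sequence for $\Q^\dagger$.

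The main obstacle is the restriction principle in the first step, which is precisely the technical core shared with the proof of Theorem~\ref{thm: induced}. The delicate point is bookkeeping walls of $\mathfrak{D}_\Q$ whose supports involve vertices outside $V$: one must verify that after intersection with $\RR^V$ such walls either disappear or combine with the remaining walls to reproduce exactly $\mathfrak{D}_{\Q^\dagger}$, so that no extra scattering is introduced or lost on the slice. Once this restriction principle is in hand, the remainder of the proof is formal and, as indicated, noticeably lighter than the analogous argument for maximal green sequences, because there is no need to track the color of the mutated vertex along an intermediate path of chambers.
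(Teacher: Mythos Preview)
Your overall strategy matches the paper's, and you correctly identify that the argument is the proof of Theorem~\ref{thm: induced} with the green-to-red crossing condition dropped. The paper's proof is literally one sentence to this effect: take a finite transverse path $p$ in $\D(\Q)$ from the all-positive to the all-negative chamber encoding the green-to-red sequence, and observe that $\pi\circ p$ is a finite transverse path in $\D(\Q^\dagger)$ with the same endpoints.

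However, your description of the restriction principle is not right, and the error is substantive rather than cosmetic. You say $\D(\Q^\dagger)$ is obtained from $\D(\Q)$ by \emph{intersecting with the coordinate subspace} $\RR^V\subset\RR^N$, and that this intersection sends cluster chambers to cluster chambers. It does not: the open all-positive chamber $(\RR_{>0})^N$ already has empty intersection with $\RR^V$, since every point of $\RR^V$ has the coordinates outside $V$ equal to zero. Likewise, walls $(n,W)$ of $\D(\Q)$ with $n$ supported partly outside $V$ will typically meet $\RR^V$ in codimension-one pieces, producing spurious walls that are not in $\D(\Q^\dagger)$. So slicing does not recover the subquiver's diagram and does not transport reachability.

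The correct operation, and the one actually used in the proof of Theorem~\ref{thm: induced} you are invoking, is the coordinate \emph{projection} $\pi:\RR^N\to\RR^V$. The paper's key lemma states $\pi^*\D(\Q^\dagger)=\D(\Q)/\langle y_i\mid i\notin V\rangle$: the walls of $\D(\Q^\dagger)$, pulled back along $\pi$, are exactly those walls of $\D(\Q)$ that survive the reduction. The consequence you need is that a finite transverse path in $\D(\Q)$ projects to a finite transverse path in $\D(\Q^\dagger)$, and $\pi$ sends the all-positive (respectively all-negative) chamber of $\RR^N$ onto that of $\RR^V$. With ``intersection'' replaced by ``projection'' throughout, your argument is correct and is the paper's proof; the chamber-level reachability formulation and the path formulation are then interchangeable.
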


The proof appears in Section \ref{section: proofs}.

\subsection{Conjugation}

By Theorem \ref{thm: greenred}, the quivers at the beginning and end of a green-to-red sequence are canonically identified.  This allows us to \textbf{conjugate} a green-to-red sequence for a quiver $\Q$ by any sequence of mutations from $\Q$ to another quiver $\Q'$.

\begin{thm}\label{thm: conjg2r}
The conjugation of a green-to-red sequence by any other sequence of mutations is a green-to-red sequence.
\end{thm}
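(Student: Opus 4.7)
Given a green-to-red sequence $\tau=(k_1,\ldots,k_n)$ for $\Q$ and a mutation sequence $\sigma=(i_1,\ldots,i_m)$ that carries the initial $g$-seed of $\Q$ to a $g$-seed with quiver $\Q'$, I interpret the conjugate as the concatenated sequence $\sigma^{-1}\tau\sigma := (i_m,\ldots,i_1,\,k_1,\ldots,k_n,\,i_1,\ldots,i_m)$, applied to the initial $g$-seed $s_0^{\Q'}$ of $\Q'$. The plan is to show that $\mu_{\sigma^{-1}\tau\sigma}(s_0^{\Q'})$ is a $g$-seed with quiver $\Q'$ in which every vertex is red.

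The first step tracks the underlying quiver. Quiver mutation depends only on the quiver, not on the $g$-vectors, and $\mu_k\mu_k=\mathrm{id}$, so $\sigma^{-1}$ carries any seed with quiver $\Q'$ to one with quiver $\Q$. Since $\tau$ is a green-to-red sequence for $\Q$, its final seed has quiver $\Q$ by Theorem \ref{thm: greenred}(2); combined with the determinism of quiver mutation, this implies $\tau$ carries any $\Q$-quivered seed to a $\Q$-quivered seed. Finally, $\sigma$ restores quiver $\Q'$, so the endpoint has the correct quiver.

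The second, main step is to show this endpoint is all red. By Theorem \ref{thm: greenred}(2), any all-red seed with quiver $\Q'$ in the mutation class of $s_0^{\Q'}$ is unique if it exists; call it $s_{\mathrm{red}}^{\Q'}$. Using $\mu_\sigma\circ\mu_{\sigma^{-1}}=\mathrm{id}$, the identity $\mu_{\sigma^{-1}\tau\sigma}(s_0^{\Q'})=s_{\mathrm{red}}^{\Q'}$ is equivalent to the equivariance
\[
\mu_\tau\bigl(\mu_{\sigma^{-1}}(s_0^{\Q'})\bigr) = \mu_{\sigma^{-1}}\bigl(s_{\mathrm{red}}^{\Q'}\bigr),
\]
which asserts that $\mu_\tau$ intertwines the transport by $\mu_{\sigma^{-1}}$ of the distinguished pair $(s_0^{\Q'}, s_{\mathrm{red}}^{\Q'})$.

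Establishing this equivariance is the main obstacle. It is not purely formal: the $g$-vector mutation rule switches between the green and red formulas depending on the current $g$-vectors, so the change of starting seed propagates nontrivially through $\tau$. I would proceed by induction on the length of $\sigma$, reducing to the single-mutation case $\sigma=(i)$, where the claim becomes $\mu_i\mu_\tau\mu_i(s_0^{\mu_i(\Q)})=s_{\mathrm{red}}^{\mu_i(\Q)}$. This base case can be attacked by tracking $g$-vectors and colors through the three mutations, leveraging the Sign Coherence Theorem and the canonical characterization of the initial and all-red seeds in Theorem \ref{thm: greenred}. Alternatively, the equivariance becomes transparent in the scattering-diagram language alluded to in the preceding remark: $g$-seeds correspond to chambers, the initial and all-red seeds form a distinguished pair of opposing chambers, and the piecewise-linear coordinate change induced by any mutation sequence interchanges opposing pairs compatibly.
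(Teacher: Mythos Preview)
Your reduction to the single-mutation case is the same strategy the paper uses, but you stop precisely where the content lies. You write that the base case ``can be attacked by tracking $g$-vectors and colors through the three mutations,'' yet you do not carry this out, and it is not routine: the seed $\mu_i(s_0^{\Q'})$ is \emph{not} the initial $g$-seed on $\Q$, so the colors encountered along $\tau$ can differ from those in the original run of $\tau$ on $s_0^{\Q}$, and the green/red branches of the $g$-vector mutation rule may fire differently. Establishing that the outcomes nevertheless agree is exactly the ``equivariance'' you flag as the main obstacle, and you have not supplied it. (There is also a framing issue: you aim to show the conjugate equals $s_{\mathrm{red}}^{\Q'}$, but the existence of $s_{\mathrm{red}}^{\Q'}$ is part of the conclusion; this is repairable but should be avoided.)

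Your second alternative is the paper's actual route, but as stated it is both too vague to count as a proof and slightly inaccurate. The paper invokes the lemma describing $\D(\mu_k(\Q))$ via the piecewise-linear involution $T_k$: a finite transverse path $p$ in $\D(\Q)$ realizing $\tau$ is sent to a finite transverse path $T_k\circ p$ in $\D(\mu_k(\Q))$. The key computation is that $T_k$ does \emph{not} take the all-positive chamber to the all-positive chamber (so your phrase ``interchanges opposing pairs compatibly'' is off); rather, it takes the all-positive and all-negative orthants to the chambers that share the wall $(e_k,e_k^\perp)$ with the all-positive and all-negative chambers of $\D(\mu_k(\Q))$. Prepending and appending a single crossing of $e_k^\perp$ --- which on the mutation side is exactly the conjugating mutation at $k$ --- then produces a finite transverse path from the all-positive to the all-negative chamber, hence a green-to-red sequence for $\mu_k(\Q)$. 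That explicit identification of where $T_k$ sends the two distinguished orthants is the missing substantive step.
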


\noindent Theorem \ref{thm: conjg2r} will be proved using scattering diagrams in Section \ref{section: proofs}.  As a corollary, the existence of green-to-red sequences is invariant under mutation.


\begin{coro}
If a green-to-red sequence exists for a quiver $\Q$, then a green-to-red sequence exists for any mutation equivalent quiver $\Q'$.
\end{coro}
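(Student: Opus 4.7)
The plan is to obtain this corollary as an essentially immediate consequence of Theorem \ref{thm: conjg2r}. Given a green-to-red sequence $\mathbf{s}$ for $\Q$ and a mutation-equivalent quiver $\Q'$, I would first fix any sequence of mutations $\mathbf{t}$ taking $\Q$ to $\Q'$; such a sequence exists by the definition of mutation equivalence.

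I would then consider the conjugate mutation sequence $\mathbf{t}^{-1}\mathbf{s}\,\mathbf{t}$, interpreted as in the paragraph preceding Theorem \ref{thm: conjg2r}: the canonical identification from Theorem \ref{thm: greenred}(2) between the quivers at the two ends of $\mathbf{s}$ makes this composition a well-defined mutation sequence starting at the initial $g$-seed with quiver $\Q'$. Theorem \ref{thm: conjg2r} then asserts that this conjugate is itself a green-to-red sequence, so its terminal $g$-seed has all vertices red, witnessing that $\Q'$ admits a green-to-red sequence.

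There is no real obstacle here: all substantive content is packaged inside Theorem \ref{thm: conjg2r}, whose proof via the scattering-diagram machinery of Gross--Hacking--Keel--Kontsevich is deferred to Section \ref{section: proofs}. The corollary is a one-step deduction: the existence of a green-to-red sequence at a single point of a mutation class propagates to the entire class by conjugating with connecting mutation sequences, and the symmetry of the situation (reversing $\mathbf{t}$ gives a sequence from $\Q'$ back to $\Q$) makes the conclusion a genuine equivalence over the mutation class, even though only one implication is stated.
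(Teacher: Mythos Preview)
Your proposal is correct and matches the paper's approach exactly: the paper presents this corollary without proof, introducing it with the phrase ``As a corollary, the existence of green-to-red sequences is invariant under mutation,'' so the intended argument is precisely the one-step deduction from Theorem~\ref{thm: conjg2r} that you outline.
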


%

%
%

%
%

\section{Scattering diagrams}

\def\B{\mathsf{B}}
\def\D{\mathfrak{D}}

The remainder of this note reviews the scattering diagrams of Gross-Hacking-Keel-Kontsevich and the proofs of the necessary theorems.

\subsection{Formal elementary transformations}

Let $\Q$ be a quiver, and index the vertices of $\Q$ with the set $\{1,2,...,r\}$.  The quiver may be encoded in the \textbf{skew-adjacency matrix} $\B$, which is the following $r\times r$ matrix.
\[ \B_{ij} = (\text{\# of arrows from $j$ to $i$}) - (\text{\# of arrows from $i$ to $j$}) \]

Consider the ring
\[ R:= \mathbb{Q}[x_1^{\pm1},x_2^{\pm1},...,x_r^{\pm1}][[y_1,y_2,...,y_r]]\]
that is, formal power series in the variables $y_1,y_2,...,y_r$ with coefficients in the ring of Laurent polynomials in $x_1,x_2,...,x_r$. We use multinomial notation for the $x$ and $y$ variables; that is,
\[ \forall m=(m_1,m_2,...,m_r)\in \ZZ^r,\;\;\; x^m := \prod_{i=1}^r x_i^{m_i} ,\;\;\; \forall n=(n_1,n_2,...,n_r)\in \mathbb{N}^r,\;\;\; y^n := \prod_{i=1}^ry_i^{n_r}\]

For $n\in \mathbb{N}^r$, define the \textbf{formal elementary transformation} $E_{n}:R\longrightarrow R$ by
\[ E_{n}({x}^m) = (1+x^{\B n}y^n)^{\frac{n\cdot m}{\gcd(n)}}x^m,\;\;\; E_{n}(y^{n'})=y^{n'}  \]
Here, $n\cdot m=\sum_i n_im_i$ denotes the Euclidean inner product, and $\gcd(n)$ is the greatest common divisor of the coordinates of $n$.  This is an automorphism, with inverse
\[ E_{n}^{-1}({x}^m) = (1+x^{\B n}y^n)^{-\frac{n\cdot m}{\gcd(n)}}x^m,\;\;\; E_{n}^{-1}(y^{n'})=y^{n'}  \]
%
Formal elementary transformations need not commute, with the following basic example.
\begin{prop}\label{prop: five}
Let $n,n'\in \mathbb{N}^r$ with $n\cdot \B n'=1$.  Then 
\[ E_nE_{n'} = E_{n'}E_{n+n'}E_{n} \]
\end{prop}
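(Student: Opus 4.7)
The plan is to verify the identity by evaluating both sides on the generators of $R$. Every formal elementary transformation fixes each $y^{n'}$, so it suffices to check that $E_n E_{n'}$ and $E_{n'}E_{n+n'}E_n$ agree on each Laurent monomial $x^m$ with $m\in\ZZ^r$. Because each $E_{(-)}$ is a ring automorphism and all variables commute, the task reduces to tracking products of binomial power series $(1+P)^e$ with $P\in(y_1,\dots,y_r)$.

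First I would record several preliminaries forced by the hypothesis $n\cdot\B n'=1$. The integer $\gcd(n)$ divides $n\cdot\B n'=1$, so $\gcd(n)=1$; by skew-symmetry of $\B$ the equalities $n'\cdot\B n=-1$ and $(n+n')\cdot\B n'=1$ similarly force $\gcd(n')=\gcd(n+n')=1$, so every exponent appearing in the definitions of $E_n$, $E_{n'}$, and $E_{n+n'}$ is an ordinary integer. Introducing the shorthands $u:=x^{\B n}y^n$ and $v:=x^{\B n'}y^{n'}$ (so that $uv=x^{\B(n+n')}y^{n+n'}$), skew-symmetry of $\B$ together with $n\cdot\B n'=1$ yields
\begin{align*}
E_n(u)&=u, & E_n(v)&=(1+u)\,v,\\
E_{n'}(u)&=(1+v)^{-1}u, & E_{n'}(v)&=v,\\
E_{n+n'}(u)&=(1+uv)^{-1}u, & E_{n+n'}(v)&=(1+uv)\,v.
\end{align*}
Setting $a:=n\cdot m$ and $b:=n'\cdot m$ gives $E_n(x^m)=(1+u)^a x^m$, $E_{n'}(x^m)=(1+v)^b x^m$, and $E_{n+n'}(x^m)=(1+uv)^{a+b} x^m$.

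With these building blocks the two sides can be computed directly. The left side unwinds immediately to $E_nE_{n'}(x^m)=(1+u)^a\bigl(1+(1+u)v\bigr)^b x^m$. For the right side, applying $E_{n+n'}$ to $E_n(x^m)$ and using $1+(1+uv)^{-1}u=(1+u+uv)/(1+uv)$ produces $E_{n+n'}E_n(x^m)=(1+u+uv)^a(1+uv)^b x^m$; a final application of $E_{n'}$, combined with the key factorization $1+u+v+uv=(1+u)(1+v)$, reduces this expression to $(1+u)^a\bigl(1+v(1+u)\bigr)^b x^m$, which agrees with the left side because $u$ and $v$ commute.

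The only mild obstacle is the bookkeeping of the negative-power terms arising from $E_{n'}(u)=(1+v)^{-1}u$ and $E_{n+n'}(u)=(1+uv)^{-1}u$; all apparent denominators cancel thanks to the factorization $1+u+v+uv=(1+u)(1+v)$, after which the identity is purely formal. Each intermediate expression lies in $R$ because $u$, $v$, and $uv$ all have positive $y$-degree, so the binomial series converge in the $(y_1,\dots,y_r)$-adic topology regardless of the sign of the exponent.
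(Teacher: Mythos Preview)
Your proof is correct and follows essentially the same approach as the paper: both verify the identity by direct computation of each side on the generators $x^m$, after first deducing $\gcd(n)=\gcd(n')=\gcd(n+n')=1$ from skew-symmetry and the hypothesis. Your introduction of the shorthands $u=x^{\B n}y^n$ and $v=x^{\B n'}y^{n'}$ and the precomputation of how each $E_{(-)}$ acts on $u$ and $v$ streamlines the bookkeeping compared to the paper's version, but the underlying argument is the same.
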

\begin{proof}  Since $\B$ is skew-symmetric, $n\cdot \B n=n'\cdot \B n'=0$ and $n'\cdot \B n= -n\cdot \B n'=-1$.  Furthermore, since $n\cdot\B n'=1$, both $\gcd(n)=\gcd(n')=1$.  In a similar argument, $(n+n')\cdot \B n'=n'\cdot \B n'+n\cdot \B n'=1$, and so $\gcd(n+n')=1$.  We now compute the two actions directly.
\begin{eqnarray*}
E_nE_{n'} (x^m) &=& E_n \left( (1+x^{\B n'}y^{n'})^{n'\cdot m} x^m\right) \\
&=& (1+(1+x^{\B n}y^{n})^{n\cdot B n'}x^{\B n'}y^{n'})^{n'\cdot m}(1+x^{\B n}y^{n})^{n\cdot m}x^m \\
&=& (1+x^{\B n'}y^{n'}+ x^{\B(n+n')}y^{n+n'})^{n'\cdot m}(1+x^{\B n}y^{n})^{n\cdot m}x^m
\end{eqnarray*}
For comparison, $E_{n'}E_{n+n'}E_n(x^m)$ is equal to
\begin{eqnarray*}
&=&E_{n'}E_{n+n'}\left( (1+x^{\B n}y^{n})^{n\cdot m} x^m\right) \\
&=& E_{n'}\left( (1+(1+x^{\B(n+n')}y^{n+n'})^{-1}x^{\B n}y^{n})^{n\cdot m} (1+x^{\B(n+n')}y^{n+n'})^{n\cdot m+n'\cdot m}x^m\right)\\
&=& E_{n'}\left( (1+x^{\B(n+n')}y^{n+n'}+x^{\B n}y^{n})^{n\cdot m} (1+x^{\B(n+n')}y^{n+n'})^{n'\cdot m}x^m\right)\\
&=& E_{n'}\left( (1+(1+x^{\B n'}y^{n'})x^{\B n}y^{n})^{n\cdot m} (1+x^{\B(n+n')}y^{n+n'})^{n'\cdot m}x^m\right)\\
&=& (1+x^{\B n}y^{n})^{n\cdot m} (1+(1+x^{\B n'}y^{n'})^{-1}x^{\B(n+n')}y^{n+n'})^{n'\cdot m}(1+x^{\B n'}y^{n'})^{n'\cdot m}x^m\\
&=& (1+x^{\B n}y^{n})^{n\cdot m} (1+x^{\B n'}y^{n'}+x^{\B(n+n')}y^{n+n'})^{n'\cdot m}x^m
\end{eqnarray*}
Since both actions send $y^n$ to $y^n$, they coincide on a generating set.
%
\end{proof}

\begin{rem}
Following ideas of Reineke \cite{Rei10} and Kontsevich-Soibelman \cite{KS11}, any such identity between formal elementary transformations determines an analogous identity among quantum dilogarithms \cite{Kel11c}.  The above proposition corresponds to the Faddeev-Kashaev identity, which in turn implies the pentagonal identity for the Rogers' dilogarithm.
\end{rem}


\subsection{Scattering diagrams}

Scattering diagrams may be regarded as a method for visualizing diagrams of formal elementary transformations.  For a quiver $\Q$,\footnote{While $\Q$ does not appear in the definition of a wall, it is necessary to associate a formal elementary transformation $E_n$ to the wall, and so we shouldn't consider walls without first having a quiver in mind.} a \textbf{wall} is a pair $(n,W)$ consisting of
\begin{itemize}
	\item a non-zero element $n\in \mathbb{N}^r$, and
	\item a convex polyhedral cone $W$ in $\mathbb{R}^r$ which spans $n^\perp:=\{m\in \mathbb{R}^r \mid n\cdot m=0\}$.
\end{itemize}
Since $n\in \mathbb{N}^r$, this implies that $W\cap (\mathbb{R}_{>0})^r = W\cap (\mathbb{R}_{<0})^r=\emptyset$.  We will refer to the open half-space $\{m\in \mathbb{R}^r \mid n\cdot m>0\}$ as the \textbf{green} side of $W$, and $\{m\in \mathbb{R}^r \mid n\cdot m<0\}$ as the \textbf{red} side of $W$.  

A \textbf{scattering diagram} $\D$ is a collection of walls in $\mathbb{R}^r$ for the same $\Q$.\footnote{We explicitly allow multiple copies of the same wall.}  Let us say a smooth path $p:[0,1]\rightarrow \mathbb{R}^r$ in a scattering diagram $\D$ is \textbf{finite transverse} if
\begin{itemize}
	\item $p(0)$ and $p(1)$ are not in any walls,
	\item whenever the image of $p$ intersects a wall, it crosses it transversely, and
	\item the image of $p$ intersects finitely many walls, and does not intersect the boundary of a wall or the intersection of two walls which span different hyperplanes.
\end{itemize} 
A finite transverse path $p$ determines an automorphism of $R$, as follows.  List the walls crossed by $p$ in order:\footnote{The path $p$ may simultaneously cross multiple walls, but only if those walls span the same hyperplane.  In this case, the walls may be listed in any order, since the corresponding automorphisms commute.}
\[ (n_1,W_1), (n_2,W_2),...,(n_k,W_k) \]
and to each, associate the sign 
\[ \epsilon_i := \left\{
\begin{array}{cc}
 +1 &\text{ if $p$ crossed $W_i$ from the green side to the red side} \\
 -1 & \text{ if $p$ crossed $W_i$ from the red side to the green side}
\end{array}\right\} \]
Then the \textbf{path-ordered product} of $p$ is
\[ E_{n_k}^{\epsilon_k}E_{n_{k-1}}^{\epsilon_{k-1}}\cdots E_{n_1}^{\epsilon_1}\]
A finite scattering diagram is \textbf{consistent} if the path-ordered product associated to every finite transverse loop is the trivial automorphism of $R$.  Two finite scattering diagrams for the same quiver are \textbf{equivalent} if any path which is finite transitive in both diagrams determines the same path-ordered product.\footnote{Equivalence of scattering diagrams is a fairly restrictive condition; essentially, it just allows walls to be split into multiple walls, and vice versa.}

\begin{rem}
A finite scattering diagram determines a morphism diagram whose objects are a copy of $R$ for each chamber, and whose arrows are elementary transformations for each wall between chambers.  The consistency condition is equivalent to the commutativity of this diagram.
\end{rem}


\begin{ex}
Consider the quiver $\Q_1$ in Figure \ref{fig: quiver1}.

\begin{figure}[h!t]
	\begin{tabular}{c}
	\begin{subfigure}{.3\textwidth}
	\centering
	\begin{tikzpicture}
	\begin{scope}
		\path[use as bounding box] (-1,-.75) rectangle (1,1);
		\node[mutable] (1) at (-1,0) {$1$};
		\node[mutable] (2) at (1,0) {$2$};
		\draw[-angle 90] (1) to (2);
	\end{scope}
	\end{tikzpicture}	
    \caption{The quiver $\Q_1$}
    \label{fig: quiver1}
    	\end{subfigure}\\[1.75cm]
	\begin{subfigure}{.3\textwidth}
	\[ \B = \gmat{ 0 & -1 \\ 1 & 0 }\]
	\vspace{.25cm}
    \caption{The matrix $\B$}
    \label{fig: matrix1}
    	\end{subfigure}
	\end{tabular}
	\begin{subfigure}{.4\textwidth}
    \begin{tikzpicture}
    \begin{scope}[scale=.3]
    	\draw[step=1,draw=black!10,very thin] (-5.5,-5.5) grid (5.5,5.5);
        \draw[wall] (0,0) to (5.5,0);
        \draw[wall] (0,0) to (0,5.5);
        \draw[wall] (0,0) to (-5.5,0);
        \draw[wall] (0,0) to (5.5,-5.5);
        \draw[wall] (0,0) to (0,-5.5);
        \node[right] (1) at (5.5,0) {$((0,1),(\mathbb{R},0))$};
        \node[above] (2) at (0,5.5) {$((1,0),(0,\mathbb{R}))$};
        \node[below] (4) at (5.5,-5.5) {$((1,1),\mathbb{R}_{\geq0}\cdot (1,-1))$};
	\draw[green!50!black,->] (2,2) arc (45:390:2.828);
	\node[green!50!black, above left] at (-2,2) {$p$};
    \end{scope}
    \end{tikzpicture}
    \caption{The scattering diagram $\D$}
    \label{fig: scattering1}
    \end{subfigure}
    \caption{An example of a consistent finite scattering diagram}
    \label{fig: example1}
\end{figure}
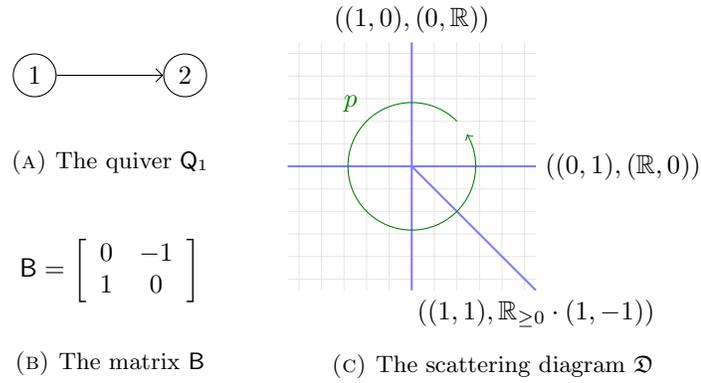
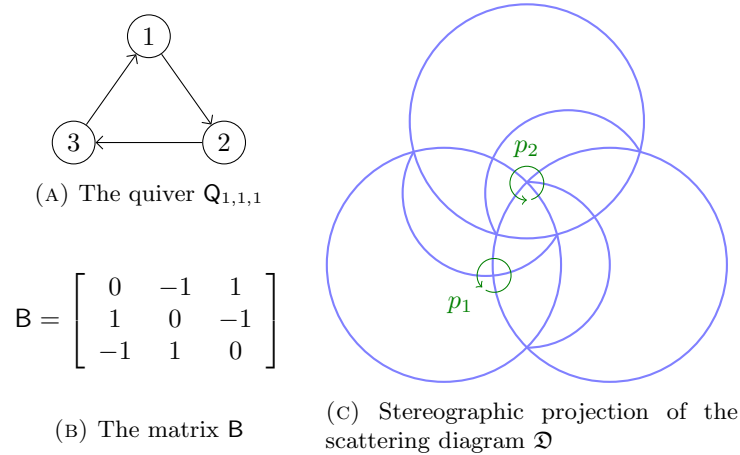

We claim the scattering diagram $\D$ in Figure \ref{fig: scattering1} is consistent.  The path-ordered product of the simple transverse path $p$ in the figure is 
\[ E_{(0,1)}^{-1}E_{(1,1)}^{-1}E_{(1,0)}^{-1}E_{(0,1)}E_{(1,0)} \]
By Proposition  \ref{prop: five}, this is the trivial automorphism of $R$.  All other path-ordered products of finite transverse loops are conjugations of this expression, and so they are also trivial.\footnote{Whenever $r=2$, checking the any finite transverse transverse loop around the origin suffices to check consistency.}  Hence, $\D$ is a consistent scattering diagram.
\end{ex}

\begin{ex}\label{ex: stereo1}
Consider the quiver $\Q_{1,1,1}$ in Figure \ref{fig: quiver2}.  Since $r=3$, a scattering diagram for $\Q_{1,1,1}$ consists of scale-invariant subsets of $\mathbb{R}^3$, it may be visualized by intersecting with a unit sphere and stereographically projecting onto the plane $x_1+x_2+x_3=\sqrt{3}$.

\begin{figure}[h!t]
	\begin{tabular}{c}
	\begin{subfigure}{.3\textwidth}
	\centering
	\begin{tikzpicture}
	\begin{scope}
		\node[mutable] (1) at (0,1.414) {$1$};
		\node[mutable] (2) at (1,0) {$2$};
		\node[mutable] (3) at (-1,0) {$3$};
		\draw[-angle 90] (1) to (2);
		\draw[-angle 90] (2) to (3);
		\draw[-angle 90] (3) to (1);
	\end{scope}
	\end{tikzpicture}	
    \caption{The quiver $\Q_{1,1,1}$}
    \label{fig: quiver2}
    	\end{subfigure}\\[1.75cm]
	\begin{subfigure}{.3\textwidth}
	\[ \B = \gmat{ 0 & -1 & 1 \\ 1 & 0 & -1 \\ -1 & 1 & 0 }\]
	\vspace{.25cm}
    \caption{The matrix $\B$}
    \label{fig: matrix2}
    	\end{subfigure}
	\end{tabular}
	\begin{subfigure}{.4\textwidth}
	\begin{tikzpicture}[scale=-.45,xscale=-1]
		\draw[wall] (0,-2.828) circle (3.46);
		\draw[wall] (2.45,1.414) circle (3.46);
		\draw[wall] (-2.45,1.414) circle (3.46);
		\begin{scope}
			\clip (-2.45,1.414) circle (3.46);
			\draw[wall] (-1.22,-.707) circle (2.45);
		\end{scope}
		\begin{scope}
			\clip (0,-2.828) circle (3.46);
			\draw[wall]  (1.22,-.707) circle (2.45);
		\end{scope}
		\begin{scope}
			\clip (2.45,1.414) circle (3.46);
			\draw[wall]  (0,1.414) circle (2.45);
		\end{scope}
		\draw[green!50!black,<-] (-1.32,2.08) arc (-225:120:.5);
		\node[below left,green!50!black] at (-1.32,2.08) {$p_1$};
		\draw[green!50!black,<-] (0,-.5) arc (-270:75:.5);
		\node[above,green!50!black] at (0,-1.5) {$p_2$};

    \end{tikzpicture}
    \caption{Stereographic projection of the scattering diagram $\D$}
    \label{fig: scattering2}
    \end{subfigure}
    \caption{An example of a consistent finite scattering diagram}
    \label{fig: example2}
\end{figure}

The scattering diagram $\D$ in Figure \ref{fig: scattering2} is consistent.  Checking this can be reduced to checking the path-ordered product of tiny loops around intersections of walls.  The intersections among the walls in $\D$ are the nine rays which project to the the nine intersections in Figure \ref{fig: scattering2}.  The triviality of the tiny loops around the $4$-valent intersections (such as $p_1$ in the figure) is the commutativity of the associated elementary transformations, while the triviality of the tiny loops around the $5$-valent intersections (such as $p_2$ in the figure) reduces to Proposition \ref{prop: five}.  
%
\end{ex}

\subsection{Reduction of scattering diagrams}

The scattering diagrams we are interested in typically have infinitely many walls, and so there are not enough finite transverse loops to test for the `right' notion of consistency.  The simplest solution is to make sense of formal limits of consistent finite scattering diagrams.

Let $I$ be a monomial ideal in $\mathbb{Q}[[y_1,y_2,...,y_r]]$.  For each $n\in \mathbb{N}^r$, $E_n$ and $E_n^{-1}$ descend to well-defined automorphisms of $R/I$, which are trivial when $y^n\in I$.  A finite scattering diagram is \textbf{consistent mod $I$} if the path-ordered product associated to every transverse loop is the trivial automorphism of $R/I$.  Two finite scattering diagrams for the same quiver are \textbf{equivalent mod $I$} if any path which is finite transverse in both induces the same automorphism of $R/I$.

The \textbf{reduction} $\D/I$ of a scattering diagram $\D$ is obtained by deleting any wall of the form $(n,W)$ with $y^n\in I$.  If $\D$ is a consistent finite scattering diagram, then $\D/I$ is consistent mod $I$.  This idea can be extended to a criterion for the consistency of infinite scattering diagrams.

A scattering diagram $\D$ is \textbf{consistent} if, for each monomial ideal $I\subset \mathbb{Q}[[y_1,y_2,...,y_r]]$ with finite dimensional quotient, the reduction $\D/I$ is finite and consistent mod $I$.  Two scattering diagrams $\D_1$ and $\D_2$ for the same quiver are \textbf{equivalent} if, for every monomial ideal $I\subset \mathbb{Q}[[y_1,y_2,...,y_r]]$ with finite dimensional quotient, the reductions $\D_1/I$ and $\D_2/I$ are finite and equivalent mod $I$.

We may now state one of the main existence and uniqueness results of the theory.

\begin{thm}\label{thm: DQ}\cite[Theorems 1.13 and 1.28]{GHKK}
For each quiver $\Q$, there is a consistent scattering diagram $\D(\Q)$, unique up to equivalence, such that 
\begin{itemize}
	\item for each $i\in \{1,2,...,r\}$, there is a wall of the form $(e_i, e_i^\perp)$,\footnote{That is, the hyperplane where the $i$th coordinate vanishes is a wall.} and
	\item every other wall $(n,W)$ in $\D(\Q)$ has the property that $\B n \not\in W$.
\end{itemize}
\end{thm}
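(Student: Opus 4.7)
The plan is to construct $\D(\Q)$ as a formal limit of finite scattering diagrams which are consistent modulo the successive powers of the maximal monomial ideal $\mathfrak{m}=(y_1,\ldots,y_r)$. Since $E_n-\mathrm{id}$ takes values in $\mathfrak{m}^{|n|}$ (where $|n|:=n_1+\cdots+n_r$), only walls with $|n|<d$ contribute nontrivially to the reduction $\D/\mathfrak{m}^d$, so it suffices to construct a refining sequence of finite scattering diagrams $\D_d$, each satisfying the two bulleted conditions and consistent mod $\mathfrak{m}^{d+1}$, and then take the union $\D(\Q)=\bigcup_d \D_d$.

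I would proceed by induction on $d$. For $d=1$, take $\D_1$ to be the set of initial walls $(e_1,e_1^\perp),\ldots,(e_r,e_r^\perp)$; since each $E_{e_i}$ acts trivially mod $\mathfrak{m}^2$, every path-ordered product is trivial mod $\mathfrak{m}^2$. Given $\D_d$, the path-ordered product around a small transverse loop encircling any codimension-two joint is trivial mod $\mathfrak{m}^{d+1}$ by hypothesis; its leading correction lies in a graded piece of the pronilpotent Lie algebra generated by the $\log E_n$. Using the commutator identity underlying Proposition \ref{prop: five} and its higher-order analogues, this correction decomposes as a finite sum of terms of the form $c\cdot \log E_n$ with $|n|=d+1$, and for each such term I would insert a new wall $(n,W)$ placed on the unique side of the joint for which $\B n\notin W$. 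The resulting $\D_{d+1}$ is then consistent mod $\mathfrak{m}^{d+2}$ and satisfies the normalization condition by construction. Uniqueness up to equivalence would follow from a parallel induction: the requirement $\B n\notin W$ fixes the ``outgoing'' side of each nontrivial wall, so two diagrams satisfying the hypotheses are forced to agree at every order up to the subdivision of walls into sub-cones, which is precisely the notion of equivalence.

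The principal obstacle is the Lie-theoretic cancellation step: verifying that the obstruction at each joint is exactly cancellable by a finite combination of logarithms $\log E_n$ supported on walls satisfying $\B n\notin W$, and that these localized choices can be made consistently across all joints simultaneously. This rests on the fact that the pronilpotent Lie algebra generated by $\{\log E_n \mid n\in \NN^r\setminus 0\}$ is sufficiently free above degree one, with the fundamental infinitesimal commutation relation
\[ [\log E_n,\log E_{n'}]\equiv (n\cdot \B n')\,\log E_{n+n'}\pmod{\mathfrak{m}^{|n|+|n'|+1}} \]
suggested by Proposition \ref{prop: five} providing the mechanism by which obstructions at each level may be cancelled exactly, rather than merely propagated to higher orders. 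Once this structural fact is established, both existence and uniqueness follow from the bookkeeping outlined above.
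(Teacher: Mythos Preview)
The paper does not give its own proof of this theorem; it is quoted from \cite{GHKK} (Theorems 1.13 and 1.28) and simply cited, with the subsequent remark noting that \cite{GHKK} in fact prove a more general statement: any scattering diagram $\D_{in}$ consistent modulo an ideal $I$ extends uniquely to a consistent $\D$ with $\D/I=\D_{in}$ whose additional walls are outgoing ($\B n\notin W$). The theorem here is the special case $\D_{in}=\{(e_i,e_i^\perp)\}$, $I=\langle y_1,\ldots,y_r\rangle^2$.

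Your sketch is precisely the standard Kontsevich--Soibelman / Gross--Siebert / GHKK order-by-order algorithm that underlies the cited result, so there is no discrepancy of method to compare. One technical point you should be aware of: in this paper's restricted framework a wall is a pair $(n,W)$ with fixed crossing automorphism $E_n$, and the only freedom is to allow non-primitive $n$ and multiple copies of a wall. Thus the obstruction at each order must be expressible not merely as a Lie-algebra element but as a \emph{non-negative integer} combination of the $\log E_n$ (equivalently, the resulting wall functions must factor as $\prod_k (1+x^{k\B n_0}y^{kn_0})^{c_k}$ with $c_k\in\mathbb{N}$). That positivity/integrality is a genuine input beyond the bracket formula you wrote down, and in \cite{GHKK} it is handled by working with more general wall functions first and then invoking a separate positivity argument. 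Your ``sufficiently free above degree one'' heuristic is the right intuition for existence and uniqueness at the level of the pronilpotent group, but it does not by itself guarantee that the answer lands back in the discrete set of diagrams allowed here.
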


\begin{rem}
\cite{GHKK} prove a more general theorem, which states that, given a scattering diagram $\D_{in}$ which is consistent modulo $I$, there is a unique (up to equivalence) consistent scattering diagram $\D$ such that $\D/I=\D_{in}$, and $\D\setminus \D_{in}$ consists of walls $(n,W)$ such that $\B n\not\in W$.  Theorem \ref{thm: DQ} is the case where $\D$ is the scattering diagram consisting of the coordinate hyperplanes, which is consistent modulo $\langle y_1,y_2,...,y_r\rangle ^2$.
\end{rem}

\begin{ex}
Consider the quiver $\Q_2$ in Figure \ref{fig: quiver3}.

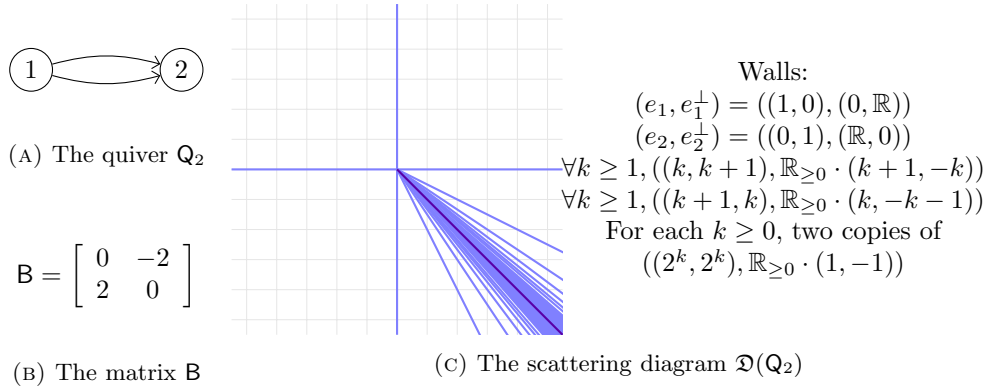
\begin{figure}[h!t]
	\begin{tabular}{c}
	\begin{subfigure}{.2\textwidth}
	\centering
	\begin{tikzpicture}
	\begin{scope}
		\path[use as bounding box] (-1,-.75) rectangle (1,1);
		\node[mutable] (1) at (-1,0) {$1$};
		\node[mutable] (2) at (1,0) {$2$};
		\draw[-angle 90,out=15,in=165] (1) to (2);
		\draw[-angle 90,out=-15,in=-165] (1) to (2);
	\end{scope}
	\end{tikzpicture}	
    \caption{The quiver $\Q_2$}
    \label{fig: quiver3}
    	\end{subfigure}\\[1.75cm]
	\begin{subfigure}{.2\textwidth}
	\[ \B = \gmat{ 0 & -2 \\ 2 & 0 }\]
	\vspace{.25cm}
    \caption{The matrix $\B$}
    \label{fig: matrix3}
    	\end{subfigure}
	\end{tabular}
\begin{subfigure}{.75\textwidth}
    \begin{tikzpicture}
    \begin{scope}[scale=.4]
    	\clip (-5.5,-5.5) rectangle (5.5,5.5);
    	\draw[step=1,draw=black!10,very thin] (-5.5,-5.5) grid (5.5,5.5);
        \draw[wall] (0,0) to (5.5,0);
        \draw[wall] (0,0) to (0,5.5);
        \draw[wall] (0,0) to (-5.5,0);
        \draw[wall] (0,0) to (0,-5.5);
        
        \draw[wall] (0,0) to (12,-6);
        \draw[wall] (0,0) to (12,-8);
        \draw[wall] (0,0) to (12,-9);
        \draw[wall] (0,0) to (12,-9.6);
        \draw[wall] (0,0) to (12,-10);
        \draw[wall] (0,0) to (12,-10.28);
        \draw[wall] (0,0) to (6,-12);
        \draw[wall] (0,0) to (8,-12);
        \draw[wall] (0,0) to (9,-12);
        \draw[wall] (0,0) to (9.6,-12);
        \draw[wall] (0,0) to (10,-12);
        \draw[wall] (0,0) to (10.28,-12);        
        \path[fill=blue!50] (0,0) to (10.5,-12) to (12,-10.5) to (0,0);
        \draw[wall, draw=gpurple] (0,0) to (5.5,-5.5);
    \end{scope}
    \begin{scope}[xshift=5cm]
    	\node at (0,0) {
	$\begin{array}{c}
	\text{Walls:} \\
	(e_1,e_1^\perp) = ((1,0),(0,\mathbb{R})) \\
	(e_2,e_2^\perp) = ((0,1),(\mathbb{R},0)) \\
	\forall k\geq 1, ((k,k+1),\mathbb{R}_{\geq0}\cdot (k+1,-k)) \\
	\forall k\geq 1, ((k+1,k),\mathbb{R}_{\geq0}\cdot (k,-k-1)) \\
	\text{For each $k\geq 0$, two copies of} \\
	((2^k,2^k),\mathbb{R}_{\geq0}\cdot (1,-1)) 
	\end{array}$
	};
    \end{scope}
    \end{tikzpicture}
    \caption{The scattering diagram $\D(\Q_2)$}
    \label{fig: scattering3}
    \end{subfigure}
    \caption{An example of a consistent infinite scattering diagram}
    \label{fig: example3}
\end{figure}

The scattering diagram in Figure \ref{fig: example3} is consistent. This fact is equivalent to the following infinite product identity, which is proven in \cite[Equation A.6]{GMN10}.
\[ E_{n_1}E_{n_2} = E_{n_2}E_{n_1+2n_2}E_{2n_1+3n_2}\cdots \left(\prod_{k=0}^\infty E_{2^k(n_1+n_2)}\right)^2
\cdots E_{3n_1+2n_2}E_{2n_1+n_2}E_{n_1}\]
%
The only walls $(n,W)$ such that $\B n\in W$ are the coordinate hyperplanes; hence, this scattering diagram is the unique (up to equivalence) scattering diagram $\D(\Q_2)$.

\end{ex}

\begin{ex}\label{ex: stereo2}
Consider the quiver $\Q_{1,1,2}$ in Figure \ref{fig: quiver4}.  Since $r=3$, a scattering diagram for $\Q_{1,1,2}$ consists of scale-invariant subsets of $\mathbb{R}^3$, it may be visualized by intersecting with a unit sphere and stereographically projecting onto the plane $x_1+x_2+x_3=\sqrt{3}$ (Figure \ref{fig: scattering4}).  In the figure, the purple arc is the projection of a half-plane which supports an infinite number of walls.

\begin{figure}[h!t]
	\begin{tabular}{c}
	\begin{subfigure}{.3\textwidth}
	\centering
	\begin{tikzpicture}
	\begin{scope}
		\node[mutable] (1) at (0,1.414) {$1$};
		\node[mutable] (2) at (1,0) {$2$};
		\node[mutable] (3) at (-1,0) {$3$};
		\draw[-angle 90] (1) to (2);
		\draw[-angle 90] (2) to (3);
		\draw[-angle 90,relative,out=15,in=165] (3) to (1);
		\draw[-angle 90,relative,out=-15,in=-165] (3) to (1);
	\end{scope}
	\end{tikzpicture}	
    \caption{The quiver $\Q_{1,1,2}$}
    \label{fig: quiver4}
    	\end{subfigure}\\[1.75cm]
	\begin{subfigure}{.3\textwidth}
	\[ \B = \gmat{ 0 & -1 & 2 \\ 1 & 0 & -1 \\ -2 & 1 & 0 }\]
	\vspace{.25cm}
    \caption{The matrix $\B$}
    \label{fig: matrix4}
    	\end{subfigure}
	\end{tabular}
	\begin{subfigure}{.4\textwidth}
	\begin{tikzpicture}[scale=-.45,xscale=-1,rotate=120]
		\draw[wall] (0,-2.828) circle (3.46);
		\draw[wall] (2.45,1.414) circle (3.46);
		\draw[wall] (-2.45,1.414) circle (3.46);
		\begin{scope}
			\clip (-2.45,1.414) circle (3.46);
			\draw[wall] (-1.22,-.707) circle (2.45);
		\end{scope}
		\begin{scope}
			\clip (0,-2.828) circle (3.46);
			\draw[wall]  (1.22,-.707) circle (2.45);
		\end{scope}

		\begin{scope}
			\clip (2.45,1.414) circle (3.46);
			\clip (0,1.414) circle (2.45);
			\draw[wall] (0,0) circle (2);
		\end{scope}
		\begin{scope}
			\clip (1.22,-.707) circle (2.45);
			\clip (0,-2.828) circle (3.46);
			\draw[wall] (0,0) circle (2);
		\end{scope}
		\begin{scope}
			\clip (0,0) circle (2);
			\begin{scope}
				\clip (-.816,1.414) circle (2.582);
				\draw[wall] (-.612,.354) circle (2.121);
			\end{scope}
			\begin{scope}
				\clip (-.490,1.414) circle (2.498);
				\draw[wall] (-.408,.707) circle (2.160);
			\end{scope}
			\begin{scope}
				\clip (-.350,1.414) circle (2.474);
				\draw[wall] (-.306,.884) circle (2.208);
			\end{scope}
		\end{scope}
		\begin{scope}
			\clip (0,-2.828) circle (3.46);
			\begin{scope}
				\clip (2.45,1.414) circle (3.46);
				\draw[wall] (.612,.354) circle (2.121);
			\end{scope}
			\begin{scope}
				\clip  (.816,1.414) circle (2.582);
				\draw[wall] (.408,.707) circle (2.160);
			\end{scope}
			\begin{scope}
				\clip (.490,1.414) circle (2.498);
				\draw[wall] (.306,.884) circle (2.208);
			\end{scope}
		\end{scope}
		\begin{scope}
			\clip (2.45,1.414) circle (3.46);
			\begin{scope}
				\clip (2.45,1.414) circle (3.46);
				\path[wall,fill=blue!50,even odd rule] (.223,1.414) circle (2.460) (-.223,1.414) circle (2.460);
			\end{scope}
			\draw[wall]  (-.816,1.414) circle (2.582);
			\draw[wall]  (-.490,1.414) circle (2.498);
			\draw[wall]  (-.350,1.414) circle (2.474);
			\draw[wall]  (-.272,1.414) circle (2.465);
			\draw[wall]  (-.223,1.414) circle (2.460);
			\draw[wall]  (.223,1.414) circle (2.460);
			\draw[wall]  (.272,1.414) circle (2.465);
			\draw[wall]  (.350,1.414) circle (2.474);
			\draw[wall]  (.490,1.414) circle (2.498);
			\draw[wall]  (.816,1.414) circle (2.582);
			\draw[wall, draw=gpurple]  (0,1.414) circle (2.45);
		\end{scope}


    \end{tikzpicture}
    \caption{Stereographic projection of the scattering diagram $\D(\Q_{1,1,2})$}
    \label{fig: scattering4}
    \end{subfigure}
    \caption{An example of a consistent infinite scattering diagram}
    \label{fig: example4}
\end{figure}
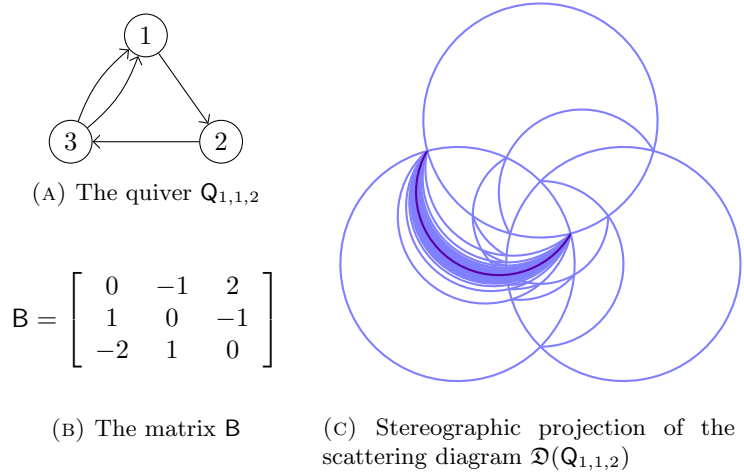

The scattering diagram $\D$ in Figure \ref{fig: scattering4} is consistent.  Again, checking this can be reduced to checking the path-ordered product of tiny loops around intersections of walls.  All but three of the intersections are $5$-valent, and correspond to Proposition \ref{prop: five}.  There is an intersection of an infinite number of walls at either end of the purple wall; the consistency around these intersections reduces to the consistency of Figure \ref{fig: scattering2}.  There is one final intersection between the purple ray and two blue walls; the formal elementary transformations of these walls all commute with each other, which implies consistency.
%
\end{ex}

\subsection{Connection to $g$-vectors}\label{section: gscattering}

One remarkable feature of the scattering diagram $\D(\Q)$ is that it encodes every $g$-seed mutation equivalent to the initial seed on $\Q$.
A \textbf{chamber} in a scattering diagram $\D$ is a path-connected component of the complement $\mathbb{R}^r-\D$.  

\begin{ex}
Since no walls can pass through $(\mathbb{R}_{>0})^r$ and $(\mathbb{R}_{<0})^r$ and the coordinate hyperplanes are walls in $\D(\Q)$, it follows that the \textbf{all-positive chamber} $(\mathbb{R}_{>0})^r$ and the \textbf{all-negative chamber} $(\mathbb{R}_{<0})^r$ must be chambers in $\D(\Q)$.  
\end{ex}

Let us call a chamber in $\D(\Q)$ \textbf{reachable} if it can be connected to the all-positive chamber by a finite transverse path.

\begin{thm}\cite[Lemma 2.9]{GHKK}
Let $\{g_1,g_2,...,g_r\}$ be the $g$-vectors in a $g$-seed which is mutation-equivalent to the initial $g$-seed\footnote{Since the vertices of $\Q$ are already indexed by $\{1,2,...,r\}$, we use the initial seed corresponding to this indexing.} with quiver $\Q$.  Then 
\[ \mathbb{R}_{>0}g_1 + \mathbb{R}_{>0}g_2+...+\mathbb{R}_{>0}g_r\]
is a reachable chamber in $\D(\Q)$.  This induces a bijection between $g$-seeds mutation equivalent to the initial $g$-seed  on $\Q$, and reachable chambers of $\D(\Q)$.
\end{thm}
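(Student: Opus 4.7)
The plan is to proceed by induction on the minimum length of a mutation sequence from the initial $g$-seed to the given $g$-seed $(\Q', \{g_k\})$. The base case is immediate: the initial seed's $g$-cone is $(\mathbb{R}_{>0})^r$, which (since no walls can meet the open positive or negative orthant and the coordinate hyperplanes are walls in $\D(\Q)$ by Theorem \ref{thm: DQ}) is the all-positive chamber and is reachable via the constant path.

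For the inductive step, I would suppose $C := \sum \mathbb{R}_{>0} g_k$ is a reachable chamber, and pick a green or red vertex $\ell$. The mutated cone $C'$ shares with $C$ the facet spanned by $\{g_j : j \neq \ell\}$. The key step is to produce a wall $(n_\ell, H) \in \D(\Q)$ supported on the hyperplane $H$ through this facet, with $n_\ell = \pm c_\ell$ for $c_\ell$ the $c$-vector at $\ell$ (sign chosen so $n_\ell \in \NN^r$, as guaranteed by sign coherence). Once this wall is in hand, a short path from $C$ to $C'$ crossing only this wall witnesses $C'$ as a reachable chamber. One then verifies by direct calculation that the wall-crossing automorphism $E_{n_\ell}^{\pm 1}$ implements exactly the $g$-vector mutation formula; the numerical input is that the pairings $n_\ell \cdot g_j$ record the arrow multiplicities in $\Q'$ that appear on the right-hand side of the mutation formula.

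The bijection then follows: injectivity because the $g$-vectors are the primitive integer generators of the extremal rays of $C$ (and the quiver can be recovered from the pattern of adjacent chambers across its facets), and surjectivity by extracting a mutation sequence from any finite transverse path from $(\mathbb{R}_{>0})^r$ to the chosen reachable chamber, using the inductive step in reverse to build up the corresponding $g$-seed one wall-crossing at a time.

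The principal obstacle is establishing that the facet of $C$ opposite $g_\ell$ actually supports a wall of $\D(\Q)$, rather than lying in the interior of a larger chamber. This is the deepest ingredient of the result and relies on the iterative construction of $\D(\Q)$ from the coordinate hyperplanes via increasingly fine monomial-ideal reductions, where the consistency requirement progressively forces the appearance of walls indexed by $c$-vectors. The condition $\B n \not\in W$ from Theorem \ref{thm: DQ} is what ensures no extraneous walls intrude, so that the reachable part of $\D(\Q)$ matches the combinatorics of the $g$-fan.
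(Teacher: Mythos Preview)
The paper does not prove this theorem; it is quoted verbatim from \cite[Lemma 2.9]{GHKK} and used as a black box. There is therefore no proof in the paper to compare your proposal against.

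Your inductive outline is the natural approach and is essentially how \cite{GHKK} proceeds. You correctly isolate the crux: one must show that every facet of a reachable chamber is actually supported by a wall of $\D(\Q)$. However, your final paragraph does not resolve this; it only gestures at the iterative construction and consistency. The tool that actually closes this gap is the mutation-invariance of scattering diagrams, stated later in this paper as \cite[Theorem 1.33]{GHKK}: the piecewise-linear map $T_k$ carries the support of $\D(\Q)$ bijectively onto the support of $\D(\mu_k(\Q))$, taking chambers to chambers. Since the all-positive chamber of $\D(\mu_k(\Q))$ has all its facets on coordinate hyperplanes (which are walls by Theorem \ref{thm: DQ}), pulling back along $T_k$ shows that the chamber adjacent to the all-positive chamber of $\D(\Q)$ across $e_k^\perp$ is genuinely a chamber bounded by walls. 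Iterating this argument along any mutation sequence is what yields the inductive step; trying to extract the needed walls directly from the monomial-ideal consistency algorithm, as you suggest, would be substantially harder.

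For surjectivity you should also be a bit more careful: a finite transverse path may cross a wall that is not a facet of the chamber it is leaving, so one first homotopes the path (within the complement of codimension-two strata) so that each crossing is through a genuine facet before reading off a mutation sequence.
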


The theorem allows us to translate statements about $g$-seeds into statements about reachable chambers.  In what follows, by `$g$-seed', we mean a $g$-seed mutation-equivalent to the initial seed on $\Q$.

\begin{itemize}
	\item A vertex $k$ in a $g$-seed is green if and only if the $g$-vector $g_k$ is on the green side of the wall spanned by the other $g$-vectors.
	\item Two $g$-seeds are related by mutation if and only the corresponding reachable chambers share a facet.  The green mutation goes from the green side to the red side, and the red mutation goes from the red side to the green side.
	\item A $g$-seed with all green vertices must correspond to the all-positive chamber, and so it must be the initial $g$-seed.
	\item A $g$-seed with all red vertices must correspond to the all-negative chamber.  Hence, the existence of an all-red seed is equivalent to the all-negative chamber being reachable.
\end{itemize}



A finite transverse path $p$ which begins in a reachable chamber defines a sequence of mutations of the corresponding $g$-seed, and every sequence of mutations of a $g$-seed can be encoded into a finite transverse path.\footnote{Specifically, there is a bijection between sequences of mutations of a $g$-seed, and finite transverse paths starting in the corresponding reachable chamber up to homotopies in the space of finite transitive paths.} Under this equivalence, the sequences of mutations we are interested in correspond to certain finite transverse paths in $\D(\Q)$.

\begin{itemize}
	\item A maximal green sequence is equivalent to a finite transitive path from the all-positive chamber to the all-negative chamber, which always crosses a wall from the green side to the red side.
	\item A green-to-red sequence is equivalent to a finite transitive path from the all-positive chamber to the all-negative chamber.
\end{itemize}
Hence, the existence of a green-to-red sequence on $\Q$ is equivalent to the all-negative chamber being reachable in $\D(\Q)$.

%
%
%

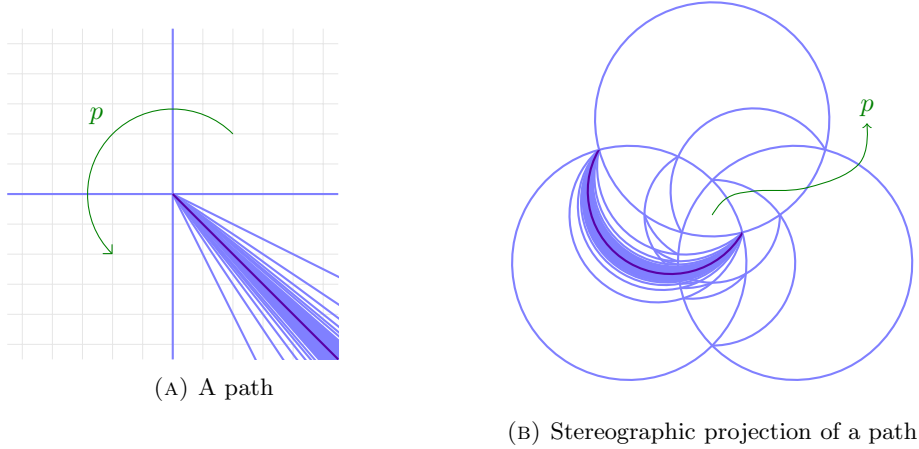
\begin{figure}[h!t]
	\begin{subfigure}{.4\textwidth}
    \begin{tikzpicture}
    \begin{scope}[scale=.4]
    	\clip (-5.5,-5.5) rectangle (5.5,5.5);
    	\draw[step=1,draw=black!10,very thin] (-5.5,-5.5) grid (5.5,5.5);
        \draw[wall] (0,0) to (5.5,0);
        \draw[wall] (0,0) to (0,5.5);
        \draw[wall] (0,0) to (-5.5,0);
        \draw[wall] (0,0) to (0,-5.5);
        \draw[wall] (0,0) to (12,-6);
        \draw[wall] (0,0) to (12,-8);
        \draw[wall] (0,0) to (12,-9);
        \draw[wall] (0,0) to (12,-9.6);
        \draw[wall] (0,0) to (12,-10);
        \draw[wall] (0,0) to (12,-10.28);
        \draw[wall] (0,0) to (6,-12);
        \draw[wall] (0,0) to (8,-12);
        \draw[wall] (0,0) to (9,-12);
        \draw[wall] (0,0) to (9.6,-12);
        \draw[wall] (0,0) to (10,-12);
        \draw[wall] (0,0) to (10.28,-12);        
        \path[fill=blue!50] (0,0) to (10.5,-12) to (12,-10.5) to (0,0);
        \draw[wall, draw=gpurple] (0,0) to (5.5,-5.5);
        
        \draw[green!50!black,-angle 90] (2,2) arc (45:225:2.828);
        \node[green!50!black,above left] at (-2,2) {$p$};
    \end{scope}
    \end{tikzpicture}
        \caption{A path}
    \label{fig: scattering5a}
	\end{subfigure}
    	\begin{subfigure}{.55\textwidth}
	\begin{center}
	\begin{tikzpicture}[scale=-.45,xscale=-1,rotate=120]
		\draw[wall] (0,-2.828) circle (3.46);
		\draw[wall] (2.45,1.414) circle (3.46);
		\draw[wall] (-2.45,1.414) circle (3.46);
		\begin{scope}
			\clip (-2.45,1.414) circle (3.46);
			\draw[wall] (-1.22,-.707) circle (2.45);
		\end{scope}
		\begin{scope}
			\clip (0,-2.828) circle (3.46);
			\draw[wall]  (1.22,-.707) circle (2.45);
		\end{scope}

		\begin{scope}
			\clip (2.45,1.414) circle (3.46);
			\clip (0,1.414) circle (2.45);
			\draw[wall] (0,0) circle (2);
		\end{scope}
		\begin{scope}
			\clip (1.22,-.707) circle (2.45);
			\clip (0,-2.828) circle (3.46);
			\draw[wall] (0,0) circle (2);
		\end{scope}
		\begin{scope}
			\clip (0,0) circle (2);
			\begin{scope}
				\clip (-.816,1.414) circle (2.582);
				\draw[wall] (-.612,.354) circle (2.121);
			\end{scope}
			\begin{scope}
				\clip (-.490,1.414) circle (2.498);
				\draw[wall] (-.408,.707) circle (2.160);
			\end{scope}
			\begin{scope}
				\clip (-.350,1.414) circle (2.474);
				\draw[wall] (-.306,.884) circle (2.208);
			\end{scope}
		\end{scope}
		\begin{scope}
			\clip (0,-2.828) circle (3.46);
			\begin{scope}
				\clip (2.45,1.414) circle (3.46);
				\draw[wall] (.612,.354) circle (2.121);
			\end{scope}
			\begin{scope}
				\clip  (.816,1.414) circle (2.582);
				\draw[wall] (.408,.707) circle (2.160);
			\end{scope}
			\begin{scope}
				\clip (.490,1.414) circle (2.498);
				\draw[wall] (.306,.884) circle (2.208);
			\end{scope}
		\end{scope}
		\begin{scope}
			\clip (2.45,1.414) circle (3.46);
			\begin{scope}
				\clip (2.45,1.414) circle (3.46);
				\path[wall,fill=blue!50,even odd rule] (.223,1.414) circle (2.460) (-.223,1.414) circle (2.460);
			\end{scope}
			\draw[wall]  (-.816,1.414) circle (2.582);
			\draw[wall]  (-.490,1.414) circle (2.498);
			\draw[wall]  (-.350,1.414) circle (2.474);
			\draw[wall]  (-.272,1.414) circle (2.465);
			\draw[wall]  (-.223,1.414) circle (2.460);
			\draw[wall]  (.223,1.414) circle (2.460);
			\draw[wall]  (.272,1.414) circle (2.465);
			\draw[wall]  (.350,1.414) circle (2.474);
			\draw[wall]  (.490,1.414) circle (2.498);
			\draw[wall]  (.816,1.414) circle (2.582);
			\draw[wall, draw=gpurple]  (0,1.414) circle (2.45);
		\end{scope}
		\draw[green!50!black,->] plot [smooth,tension=.7] coordinates {(0,0)  (-.653,-.126)  (-1.225,-.707) (-2.113,-2.034) (-3.46,-2.996) (-4.575,-2.641)};
		\node[green!50!black,above ] at (-4.575,-2.641) {$p$};


    \end{tikzpicture}
    \end{center}
    \caption{Stereographic projection of a path}
    \label{fig: scattering5b}
    \end{subfigure}
    \caption{Paths corresponding to maximal green sequences}
    \label{fig: example5}
\end{figure}

\begin{ex}  For $\Q_2$ as in Figure \ref{fig: quiver3}, there is a unique maximal green sequence, which can be realized by the path $p$ in Figure \ref{fig: scattering5a}.  

The scattering diagram in Figure \ref{fig: scattering5a} also gives a visual proof of Lemma \ref{lemma: rank2}.  A finite transverse path cannot cross the purple ray, since it supports an infinite number of walls.\footnote{Additionally, it would have to cross an infinite number of walls to get to the purple ray.}  If a maximal green sequence could begin by mutating at vertex $2$, the corresponding path would start by crossing the ray $(\mathbb{R}_{\geq0},0)$.  The path cannot cross the purple wall and it cannot recross the ray $(\mathbb{R}_{\geq0},0)$, because that would correspond to a red mutation.  Hence, the path is trapped between the two, and can never reach the all-negative chamber.
\end{ex}

\begin{ex} For $\Q_{1,1,2}$ as in Figure \ref{fig: quiver4}, there are many maximal green sequences, one which can be realized by the path $p$ depicted in Figure \ref{fig: scattering5b}.  

The stereographic projection has been chosen so that the point $(1,1,1)$ maps to the origin, and so the green side of the stereographic projection of a wall is always the concave side.  A maximal green sequence for $\Q_{1,1,2}$ corresponds to a finite transverse path in $\D(\Q_{1,1,2})$ which travels from the inner-most chamber to the exterior, and only crosses walls from the concave side to the convex side.
\end{ex}

\section{Proofs}\label{section: proofs}

\subsection{Pullbacks of scattering diagrams}

Let $\Q$ be a quiver with vertex set $\{1,2...,r\}$, and $\Q^\dagger$ the induced subquiver on the subset $V \subset \{1,2,...,r\}$.  How can we describe $\D(\Q^\dagger)$ in terms of $\D(\Q)$?

The inclusion $V\subset\{1,2,...,r\}$ determines a coordinate projection $\pi:\mathbb{R}^r\rightarrow \mathbb{R}^V$ and coordinate inclusion $\pi^\top:\mathbb{R}^V\rightarrow \mathbb{R}^r$.  
Define the \textbf{pullback} $\pi^{*}(\D(\Q^\dagger))$ of $\D(\Q^\dagger)$ to be the scattering diagram for $\Q$ a wall $(\pi^\top(n),\pi^{-1}(W))$ for each wall $(n,W)$ of $\D(\Q^{\dagger})$.

%

Our main result is that the pullback of $\D(\Q^\dagger)$ along the coordinate projection $\pi$ is the reduction of $\D(\Q)$ at the ideal generated by those $y_k$ such that $k\not\in V$.

\begin{thm}
Let $\Q$ be a quiver, let $\Q^\dagger$ be the induced subquiver with vertex set $V$, and let $\pi:\mathbb{R}^r \rightarrow \mathbb{R}^V$ be the projection onto the coordinates in $V$.  Then
\[ \pi^*\D(\Q^\dagger) = \D(\Q^\dagger) /\langle y_i \mid i\not\in V\rangle  \]
\end{thm}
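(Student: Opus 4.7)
The plan is to identify $\pi^*\D(\Q^\dagger)$ with $\D(\Q)/I$, where $I := \langle y_i : i\not\in V\rangle$, by applying the uniqueness characterization built into Theorem \ref{thm: DQ} (and the more general uniqueness mentioned in the remark following it). Concretely, I will verify that $\pi^*\D(\Q^\dagger)$ is a scattering diagram for $\Q$, is consistent modulo $I$, contains the walls $(e_i,e_i^\perp)$ for $i\in V$, and has every other wall $(n,W)$ satisfying $\B n\not\in W$. These same properties hold tautologically for $\D(\Q)/I$, so the two must agree up to equivalence.

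The main technical step is the consistency of $\pi^*\D(\Q^\dagger)$ modulo $I$. The idea is a change of $y$-coordinates in $R/I$ that absorbs certain extraneous $x$-monomials. For $i\in V$, let $\alpha_i\in \ZZ^{V^c} \subset \ZZ^r$ be the $i$-th column of $\B$ with its $V$-entries set to zero, and set $\hat y_i := y_i\,x^{\alpha_i}$. Because $x^{\alpha_i}$ is a unit, one can present
\[ R/I = \mathbb{Q}[x_1^{\pm 1},\ldots,x_r^{\pm 1}][[\hat y_i : i\in V]], \]
and the subring generated by $\{x_i,\hat y_i : i\in V\}$ is isomorphic to the $R^\dagger$ of $\Q^\dagger$. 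The content of the technical lemma is then: for each nonzero $n'\in \NN^V$, the formal elementary transformation $E_{\pi^\top(n')}$ fixes each $\hat y_i$ (using $\pi^\top(n')\cdot \alpha_i = 0$, since the supports are disjoint) and each $x_j$ for $j\not\in V$ (using $\pi^\top(n')\cdot e_j = 0$), and on the $x_i$ for $i\in V$ acts by exactly the formula for $E_{n'}^\dagger$ in the $\hat y$-coordinates (using the matrix identity $\pi\B\pi^\top = \B^\dagger$ to rewrite $x^{\B\pi^\top(n')}y^{\pi^\top(n')}$ as $x^{\B^\dagger n'}\hat y^{n'}$). In other words, under the isomorphism $R/I \cong R^\dagger \otimes_\mathbb{Q} \mathbb{Q}[x_j^{\pm 1}: j\not\in V]$, $E_{\pi^\top(n')}$ corresponds to $E_{n'}^\dagger\otimes \mathrm{id}$.

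Granting this lemma, any finite transverse path $p$ in $\pi^*\D(\Q^\dagger)$ projects via $\pi$ to a finite transverse path $\pi\circ p$ in $\D(\Q^\dagger)$ with the same ordered sequence of walls and signs; by the lemma its path-ordered product in $\mathrm{Aut}(R/I)$ is the tensor product of the path-ordered product of $\pi\circ p$ in $\mathrm{Aut}(R^\dagger)$ with the identity on $\mathbb{Q}[x_j^{\pm 1}: j\not\in V]$. Consistency of $\D(\Q^\dagger)$ then forces consistency of $\pi^*\D(\Q^\dagger)$ modulo $I$. The initial walls $(e_i,e_i^\perp)$ for $i\in V$ of $\D(\Q^\dagger)$ pull back to themselves in $\RR^r$, and for any non-initial wall $(n',W')$ of $\D(\Q^\dagger)$, the identity $\pi\B\pi^\top = \B^\dagger$ yields $\pi(\B\pi^\top(n')) = \B^\dagger n'\not\in W'$, hence $\B\pi^\top(n')\not\in \pi^{-1}(W')$. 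Augmenting both $\pi^*\D(\Q^\dagger)$ and $\D(\Q)/I$ by the initial walls $(e_i,e_i^\perp)$ for $i\not\in V$ (which are trivial modulo $I$) and invoking the uniqueness on the resulting fully consistent extensions of $\D(\Q)$ completes the identification. The main obstacle is spotting the correct change of coordinates $\hat y_i = y_i x^{\alpha_i}$; once it is in hand, the rest is a mechanical verification of the characterizing properties and an application of uniqueness.
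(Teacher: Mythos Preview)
Your proposal is correct and follows essentially the same approach as the paper. Your change of coordinates $\hat y_i = y_i x^{\alpha_i}$ is exactly the paper's ring embedding $\phi:R^\dagger\to R$, $\phi(y^n)=x^{(0,\mathsf{F}n)}y^{(n,0)}$, rephrased as a change of variables in $R/I$; both proofs then verify the compatibility $E_{\pi^\top(n')}\leftrightarrow E^\dagger_{n'}$, deduce consistency of $\pi^*\D(\Q^\dagger)$ by projecting transverse loops via $\pi$, check the incoming/outgoing wall conditions, and conclude by the uniqueness theorem of \cite{GHKK}. The only cosmetic difference is that the paper applies uniqueness directly to the two diagrams with incoming walls $\{(e_i,e_i^\perp):i\in V\}$, whereas you route through augmenting by the trivial walls $(e_i,e_i^\perp)$ for $i\notin V$; the paper's phrasing is slightly cleaner but the content is identical.
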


\begin{proof}
For simplicity, we assume $V=\{1,2,...,v\}\subset \{1,2,...,r\}$, and denote elements of $m\in\mathbb{Z}^r$ by $m=(m_1,m_2)\in \mathbb{Z}^v\oplus \mathbb{Z}^{r-v}$.  The skew-adjacency matrix decomposes as
\[ \B = \gmat { \B^\dagger & -\mathsf{F}^\top \\ \mathsf{F}  & \mathsf{C}} \]
where $\B^\dagger$ is the skew-adjacency matrix of the induced subquiver $\Q^\dagger$.  

Let $R^\dagger:= \mathbb{Z}[x_1^{\pm1},...,x_v^{\pm1}][[y_1,...,y_v]]$ and define a ring homomorphism $\phi:R^\dagger\rightarrow R$ by
\[ \phi(x^m) = x^{(m,0)},\;\;\; \phi(y^n) = x^{(0,\mathsf{F} n)}y^{(n,0)} \]
We check that $\phi$ commutes with the pullback of walls, in that $E_n\circ \phi = \phi\circ E_{(n,0)}$. 
\[ E_{(n,0)}(\phi(x^m)) = (1+x^{(\B^\dagger n,\mathsf{F}n)}y^{(n,0)})^{n\cdot m} x^{(m,0)} = \phi(E_n(x^m))\] 
\[ E_{(n,0)}(\phi(y^{n'})) = E_{(n,0)}(x^{(0,\mathsf{F}n')}y^{(n',0)}) = \phi(E_n(y^{n'}))  \]

We now show that $\pi^*(\D(\Q^\dagger))$ is a consistent scattering diagram.  Let $I\subset \mathbb{Q}[[y_1,y_2,..,y_r]]$ be a monomial ideal with finite dimensional quotient, and let $J\subset R^\dagger$ be the monomial ideal generated by
\[ \{ y^n\in R^\dagger \mid \exists n'\in \mathbb{N}^{v-r} \text{ s.t. } y^{(n,n')}\in I\} \]
The induced $R$-ideal $R\otimes_{R^\dagger} J$ is the saturation $(I:\langle y_i\mid i\not\in V\rangle^\infty)$.
%
Since every wall in $\pi^*(\D(\Q^\dagger))$ is of the form $((n,0),\pi^{-1}(W))$ for some $n\in \mathbb{N}^v$, the two reductions $\pi^*(\D(\Q^\dagger))/I$ and $\pi^*(\D(\Q^\dagger))/R\otimes_{R^\dagger} J=\pi^*(\D(\Q^\dagger)/J)$ coincide.  

Let $p$ be any transverse loop in $\pi^*(\D(\Q^\dagger))/I$, which is necessarily finite.  The walls crossed by $p$ are the preimages of the walls crossed by the loop $\pi\circ p$ in $\D(\Q^\dagger)/ J$.  Hence, if the path-ordered product of $\pi\circ p$ in $\D(\Q^\dagger)/J$ is
\[ E_{n_1}E_{n_2}\cdots E_{n_k} \]
then the path-ordered product of $p$ in $\D(\Q)/I$ is
\[ E_{(n_1,0)}E_{(n_2,0)}\cdots E_{(n_k,0)} \]
Any monomial $x^{(m_1,m_2)}\in R$ may be written as $\phi(x^{m_1})x^{(0,m_2)}$, and so
\[ E_{(n_1,0)}E_{(n_2,0)}\cdots E_{(n_k,0)}(\phi(x^{m_1})x^{(0,m_2)}) = x^{(0,m_2)}E_{(n_1,0)}E_{(n_2,0)}\cdots E_{(n_k,0)}(\phi(x^{m_1}))\]
\[= x^{(0,m_2)}\phi(E_{n_1}E_{n_2}\cdots E_{n_k}(x^{m_1}))= x^{(0,m_2)}\phi(x^{m_1}) = x^{(m_1,m_2)}\]
Hence, the path-ordered product of $p$ is trivial on $R/I$, and so $\pi^*(\D(\Q^\dagger))$ is consistent.

Both $\pi^*(\D(\Q^\dagger))$ and $\D(\Q)/\langle y_i \mid i\not\in V\rangle$ are consistent scattering diagrams in $\mathbb{R}^r$ for the matrix $\mathsf{B}$ with the same set of incoming walls $\{(e_i,e_i^\perp) \mid i\in V\}$.  By the uniqueness theorem \cite[Theorem 1.7]{GHKK}, they must coincide.
%
%
%
%
%
%
%
\end{proof}

\begin{rem}
The scattering diagram $\pi^*(\D(\Q^\dagger))$ can be regarded as the scattering diagram of a \emph{quiver with frozen vertices}; that is, vertices where mutation is prohibited.  Here,  the quiver is $\Q$ with frozen vertices $V^c$.  The theorem may be regarded as saying that freezing a vertex $k$ reduces the corresponding scattering diagram by the ideal $\langle y_k\rangle $, and adding a frozen vertex pulls back the corresponding scattering diagram.
\end{rem}



\begin{proof}[Proof of Theorem \ref{thm: induced}]
Given a maximal green sequence on $\Q$, there is a finite transverse path $p$ in $\D(\Q)$ which passes through the corresponding reachable chambers.  In particular, $p$ begins in the all-positive chamber and ends in the all-negative chamber.  The projection $\pi\circ p$ is a finite transverse path in $\D(\Q^\dagger)$ which starts in the all-positive chamber and ends in the all-negative chamber, and always crosses walls from the green side to the red side.  Hence, the sequence of walls crossed by $\pi \circ p$ determines a maximal green sequence on  the induced subquiver $\Q^\dagger$.

If a maximal green sequence on $\Q$ begins with a sequence of $k$-many mutations at vertices in $V$, then the path $p$ begins by crossing $k$-many walls in $\pi^*(\D(\Q^\dagger))\subset \D(\Q)$.  Then the projected path $\pi\circ p$ begins by crossing the images of those $k$-many walls in $\D(\Q^\dagger)$.  The maximal green sequence associated to $\pi\circ p$ then begins with the same $k$-many mutations as the original maximal green sequence.
\end{proof}

\begin{proof}[Proof of Theorem \ref{thm: inducedg2r}]
The proof of the analogous result for green-to-red sequences is virtually identical.  The only difference is that the corresponding finite transverse paths need not always cross from the green side to the red side.
\end{proof}

\begin{rem}
Once a path $p$ in $\D(\Q)$ from a maximal green sequence crosses a wall not in $\pi^*(\D(\Q^\dagger))$, it becomes quite difficult to translate mutations in the original maximal green sequence on $\Q$ into mutations in the induced maximal green sequence on $\Q^\dagger$.
\end{rem}

\subsection{Scattering diagrams and mutation}

Given a quiver $\Q$ and its mutation $\mu_k(\Q)$ at a vertex $k$, there is a natural relation between the corresponding scattering diagrams.

Let $\mathsf{E}_{k}$ be the elementary $r\times r$-matrix with a $1$ in the $(k,k)$ entry and $0$s elsewhere, and let $\B^+$ denote the matrix $r\times r$ matrix with the same positive entries as $\B$ and zero otherwise.  Let $\B^-:=\B^+-\B= (\B^+)^\top$.  Define a pair of linear maps on $\mathbb{R}^r$ as follows.
\[\mathsf{G}_k := (\mathsf{Id}-2\mathsf{E}_k)(\mathsf{Id}+\B^+\mathsf{E}_k),\;\;\; \mathsf{R}_k := (\mathsf{Id}-2\mathsf{E}_k)(\mathsf{Id}+\B^-\mathsf{E}_k) \]
These are both involutions which fix the hyperplane $e_k^\perp$.  Their transposes are given by
\[\mathsf{G}_k^\top = (\mathsf{Id}-2\mathsf{E}_k)(\mathsf{Id}-\mathsf{E}_k\B^-),\;\;\; \mathsf{R}_k^\top = (\mathsf{Id}-2\mathsf{E}_k)(\mathsf{Id}-\mathsf{E}_k\B^+) \]

The following lemma describes how the scattering diagram $\D(\Q)$ changes when mutating a vertex in $\Q$.

\begin{lemma}\cite[Theorem 1.33]{GHKK}
Given a quiver $\Q$ and a vertex $k$, the scattering diagram $\D(\mu_k(\Q))$ is equivalent to the scattering diagram $\D'$ constructed as follows.
\begin{itemize}
	\item For each $(n,W)\in \D$, if $W_+:=\{w\in W \mid e_k\cdot w \geq 0\}$ spans a hyperplane, then
	\[ (\mathsf{G}^\top_k(n),\mathsf{G}_k(W_+)) \]
	is a wall in $\D'$.
	\item For each $(n,W)\in \D$, if $W_-:=\{w\in W \mid e_k\cdot w \leq 0\}$ spans a hyperplane, then
	\[ (\mathsf{R}^\top_k(n),\mathsf{R}_k(W_-)) \]
	is a wall in $\D'$.
	\item For each $(n,W)\in \D$, if $W$ is contained in $e_k^\perp$, then $(n,W)$ also a wall in $\D'$.
\end{itemize}
%
%
\end{lemma}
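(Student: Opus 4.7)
The plan is to invoke the uniqueness portion of Theorem~\ref{thm: DQ}: it suffices to show that $\D'$ is a consistent scattering diagram for $\mu_k(\Q)$ (with skew-adjacency matrix $\B' := \mu_k(\B)$) whose incoming walls are the coordinate hyperplanes $(e_i, e_i^\perp)$ and whose remaining walls $(n',W')$ satisfy $\B' n' \notin W'$. Uniqueness will then identify $\D'$ with $\D(\mu_k(\Q))$ up to equivalence.

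I would first verify the incoming-wall structure. The wall $(e_k,e_k^\perp) \in \D(\Q)$ lies entirely in $e_k^\perp$ and is therefore preserved by the construction, producing $(e_k,e_k^\perp) \in \D'$. For $i \neq k$, the incoming wall $(e_i,e_i^\perp) \in \D(\Q)$ is split by $e_k^\perp$ into two half-hyperplanes which transform, via the explicit formulas for $\mathsf{G}_k^\top$ and $\mathsf{R}_k^\top$, into walls with normals $e_i + \B^-_{ki}e_k$ and $e_i + \B^+_{ki}e_k$; these generically have $y$-degree $\geq 2$ and are not themselves incoming in $\D'$. The coordinate hyperplane $(e_i,e_i^\perp) \in \D'$ must instead arise by combining the $\mathsf{G}_k$-image of a wall $(e_i + \B^+_{ki}e_k, W_+) \in \D(\Q)$ with the $\mathsf{R}_k$-image of a wall $(e_i + \B^-_{ki}e_k, W_-) \in \D(\Q)$, whose existence is forced by consistency of $\D(\Q)$. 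A parallel calculation, using the matrix identities relating $\B'$ to $\B$ (essentially $\B' \mathsf{G}_k^\top = \mathsf{G}_k \B$ on the $e_k^{\geq 0}$ side and $\B' \mathsf{R}_k^\top = \mathsf{R}_k \B$ on the $e_k^{\leq 0}$ side), shows that non-incoming walls of $\D'$ satisfy $\B' n' \notin W'$.

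The main content is the consistency of $\D'$. After reducing modulo any monomial ideal $I \subset \mathbb{Q}[[y_1,\ldots,y_r]]$ of finite codimension, it suffices to show that the path-ordered product of every finite transverse loop in $\D'/I$, computed with the $\B'$-formal elementary transformations, is trivial on $R/I$. The scattering construction naturally pairs transverse loops $p'$ in $\D'$ with transverse loops $p$ in $\D(\Q)$ via $\mathsf{G}_k$ on $e_k^{\geq 0}$ and $\mathsf{R}_k$ on $e_k^{\leq 0}$, and consistency of $\D(\Q)$ gives that the $\B$-path-ordered product of $p$ is trivial on $R/I$. The key intertwining identity to establish is that the $\B'$-formal elementary transformation $E_{\mathsf{G}_k^\top n}$ for $\D'$ is conjugate to the $\B$-formal elementary transformation $E_n$ via the birational cluster-mutation map on the $e_k^{\geq 0}$ side (and analogously with $\mathsf{R}_k$ on the other side); granted this, triviality transfers from $p$ to $p'$.

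The main obstacle is handling the transition across $e_k^\perp$: the formulas for the birational mutation map differ on the two sides of this hyperplane, and for any path $p'$ crossing $e_k^\perp$ one must verify that the two formulas glue consistently after inverting the binomial $1 + x^{\B e_k} y^{e_k}$, which is permissible because $e_k^\perp$ carries only the coordinate wall. This gluing is precisely the content of the cluster mutation map being a well-defined birational isomorphism, and it is the essential technical input from the Gross-Hacking-Keel-Kontsevich framework that makes the whole identification work.
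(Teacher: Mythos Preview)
The paper does not prove this lemma: it is quoted from \cite[Theorem 1.33]{GHKK} and used as a black box. The only commentary the paper provides is the remark immediately following the lemma, which explains that the formulation looks different from the one in \cite{GHKK} because the present paper uses normal vectors in $\mathbb{N}^r$ rather than in a general simplicial cone $P$, and that composing with the linear change of coordinates $P \to \mathbb{N}^r$ recovers the stated construction of $\D'$. So there is nothing in the paper to compare your argument against.

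That said, your sketch is broadly the right shape of the argument in \cite{GHKK}: one does show consistency of $\D'$ by transporting path-ordered products along the piecewise-linear map $T_k$, with the two linear pieces $\mathsf{G}_k$, $\mathsf{R}_k$ conjugating the two halves and the birational cluster mutation gluing them across $e_k^\perp$. Your treatment of the incoming walls is the weakest part. You assert that for $i\neq k$ the coordinate wall $(e_i,e_i^\perp)$ of $\D'$ ``must instead arise'' from the images of certain walls of $\D(\Q)$ ``whose existence is forced by consistency of $\D(\Q)$''; this is not how the argument goes, and as written it is circular (you are trying to establish which walls $\D'$ has). What one actually checks is that the transforms $\mathsf{G}_k^\top(e_i)$ and $\mathsf{R}_k^\top(e_i)$ each equal either $e_i$ or $e_i+|\B_{ki}|e_k$ depending on the sign of $\B_{ki}$, and that the two half-hyperplanes $\mathsf{G}_k((e_i^\perp)_+)$ and $\mathsf{R}_k((e_i^\perp)_-)$ glue along $e_k^\perp$ to the full hyperplane $e_i^\perp$ in exactly one of the two cases; together with the images of the first non-initial walls of $\D(\Q)$ adjacent to $e_i^\perp$, this reconstitutes the required incoming wall $(e_i,e_i^\perp)$ in $\D'$. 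Without this explicit bookkeeping the uniqueness appeal to Theorem~\ref{thm: DQ} does not go through.
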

\noindent As a corollary, the piece-wise linear map
\[ T_k:m\mapsto \left\{\begin{array}{cc}
\mathsf{G}_k(m) & \text{if } e_k\cdot m \geq0\\
\mathsf{R}_k(m) &\text{if } e_k\cdot m \leq0
\end{array}\right\} = \left\{\begin{array}{cc}
(\mathsf{Id}-2\mathsf{E}_k)(\mathsf{Id}+\B^+\mathsf{E}_k)m & \text{if } e_k\cdot m \geq0\\
(\mathsf{Id}-2\mathsf{E}_k)(\mathsf{Id}+\B^-\mathsf{E}_k)m&\text{if } e_k\cdot m \leq0
\end{array}\right\} \]
sends the support of walls in $\D(\Q)$ to the support of walls in $\Q(\mu_k(\Q))$, and thus chambers of $\D(\Q)$ to chambers of $\D(\mu_k(\Q))$.

\begin{rem}
The construction of $\D'\simeq \D(\mu_k(\Q))$ appears different in \cite{GHKK}, because they consider scattering diagrams with normal vectors $n$ in a simplicial cone $P\subset \mathbb{Z}^r$, rather than $\mathbb{N}^r$. Composing the construction in \cite{GHKK} with a linear map which takes their $P$ to $\mathbb{N}^r$ recovers the above construction of $\D'$.
\end{rem}


\begin{proof}[Proof of Theorem \ref{thm: conjg2r}]
First, we consider the conjugation of a green-to-red sequence by a single mutation of $\Q$ at vertex $k$.

A green-to-red sequence on $\Q$ corresponds to a finite transverse path $p$ in $\D(\Q)$ which begins in the all-positive chamber and ends in the all-negative chamber.  The image $T_k\circ p$ is a finite transitive path in $\D(\mu_k(\Q))$.   The path $T_k\circ p$ begins in the chamber which shares the wall $(e_k,e_k^\perp)$ with the all-positive chamber and ends in the chamber which shares the wall $(e_k,e_k^\perp)$ with the all-negative chamber.  Choose a path $p_1$ from the all-positive chamber to the beginning of $T_k\circ p$ which only crosses $(e_k,e_k^\perp)$ once and does not touch any other wall.  Similarly, choose a path $p_2$ from the end of $T_k\circ p$ to the all-negative chamber which only crosses $(e_k,e_k^\perp)$ once and does not touch any other wall.  

There is a finite transitive path which travels along $p_1$, then travels along $T_k\circ p$, and then travels along $p_2$.\footnote{Some smoothing may be required at the meeting points between paths.}  This path begins in the all-positive chamber and  ends in the all-negative chamber, so it determines a green-to-red sequence on $\Q^\dagger$.  Specifically, the sequence of mutations it determines is the conjugation of the original green-to-red sequence.  

By iterating this argument for an arbitrary sequence of mutations on $\Q$, the general theorem is proven.
\end{proof}

\subsection{Visualizing $\D(\Q_{2,2,3})$}

We can now deduce enough about the scattering diagram $\D(\Q_{2,2,3})$ to visualize why there are no maximal green sequences for $\Q_{2,2,3}$, without having to compute the entire scattering diagram.  As in Examples \ref{ex: stereo1} and \ref{ex: stereo2}, scattering diagrams in $\mathbb{R}^3$ may be visualized by intersecting with a unit sphere and stereographically projecting onto the plane $x_1+x_2+x_3=\sqrt{3}$ .

For any vertex $i$, the reduced scattering diagram $\D(\Q_{2,2,3})/\langle y_i\rangle$ is the pullback of the scattering diagram of a quiver with 2 vertices.  These three reductions of $\D(\Q_{2,2,3})$ are depicted in Figure \ref{fig: reductions}.  Each reduction contains a half-plane which supports an infinite number of walls; these are purple in the figure (the second reduction has an infinite number of half-planes which support an infinite number of walls; these are dense in the purple region).  

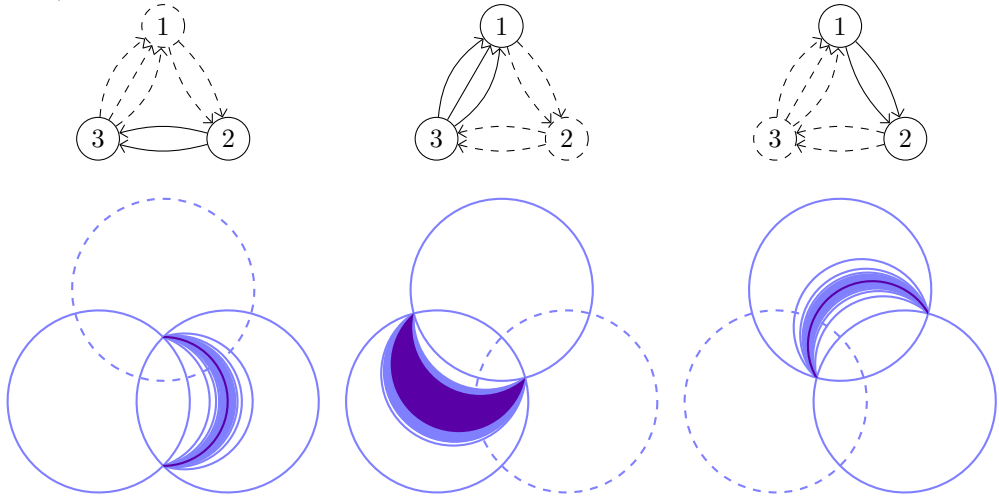
\begin{figure}[h!t]
\begin{tikzpicture}
	\begin{scope}[scale=-.35,xshift=0cm,xscale=-1,rotate=120]
		\draw[wall] (0,-2.828) circle (3.46);
		\draw[wall] (2.45,1.414) circle (3.46);
		\draw[wall,dashed] (-2.45,1.414) circle (3.46);
		\begin{scope}
			\clip (0,-2.828) circle (3.46);
			\draw[wall]  (1.633,0) circle (2.582);
			\draw[wall]  (1.47,-.283) circle (2.498);
			\draw[wall]  (1.4,-.404) circle (2.474);
			\draw[wall]  (1.361,-.471) circle (2.465);
			\draw[wall]  (1.336,-.514) circle (2.460);
			\path[wall,fill=blue!50,even odd rule] (1.336,-.514) circle (2.460) (1.113,-.9) circle (2.460);
			\draw[wall]  (1.113,-.9) circle (2.460);
			\draw[wall]  (1.089,-.943) circle (2.465);
			\draw[wall]  (1.050,-1.01) circle (2.474);
			\draw[wall]  (.98,-1.131) circle (2.498);
			\draw[wall]  (.816,-1.414) circle (2.582);
			\draw[wall, draw=gpurple]  (1.225,-.707) circle (2.45);
		\end{scope}
	\end{scope}
	\begin{scope}[xshift=4.5cm,scale=-.35,xscale=-1]
		\draw[wall] (0,-2.828) circle (3.46);
		\draw[wall,dashed] (2.45,1.414) circle (3.46);
		\draw[wall] (-2.45,1.414) circle (3.46);
		\begin{scope}
			\clip (-2.45,1.414) circle (3.46);
			\draw[wall]  (-1.837,0.354) circle (2.739);
			\draw[wall]  (-1.781,.257) circle (2.691);
			\draw[wall]  (-1.774,.244) circle (2.684);
			\path[fill=blue!50,even odd rule] (-1.774,.244) circle (2.684) (-.676,-1.658) circle (2.684);
			\draw[wall]  (-.676,-1.658) circle (2.684);
			\draw[wall]  (-.668,-1.671) circle (2.691);
			\draw[wall]  (-.612,-1.768) circle (2.739);
			\path[fill=gpurple,even odd rule] (-1.633,0) circle (2.582) (-.816,-1.414) circle (2.582);
		\end{scope}
	\end{scope}
	\begin{scope}[xshift=9cm,scale=-.35,xscale=-1,rotate=240]
		\draw[wall,dashed] (0,-2.828) circle (3.46);
		\draw[wall] (2.45,1.414) circle (3.46);
		\draw[wall] (-2.45,1.414) circle (3.46);
		\begin{scope}
			\clip (2.45,1.414) circle (3.46);
			\draw[wall]  (-.816,1.414) circle (2.582);
			\draw[wall]  (-.490,1.414) circle (2.498);
			\draw[wall]  (-.350,1.414) circle (2.474);
			\draw[wall]  (-.272,1.414) circle (2.465);
			\draw[wall]  (-.223,1.414) circle (2.460);
			\path[wall,fill=blue!50,even odd rule] (.223,1.414) circle (2.460) (-.223,1.414) circle (2.460);
			\draw[wall]  (.223,1.414) circle (2.460);
			\draw[wall]  (.272,1.414) circle (2.465);
			\draw[wall]  (.350,1.414) circle (2.474);
			\draw[wall]  (.490,1.414) circle (2.498);
			\draw[wall]  (.816,1.414) circle (2.582);
			\draw[wall, draw=gpurple]  (0,1.414) circle (2.45);
		\end{scope}
	\end{scope}
	\begin{scope}[yshift=3.5cm]
		\path[use as bounding box] (-1,-.5) rectangle  (1,1);
		\node[mutable,dashed] (1) at (90:1) {$1$};
		\node[mutable] (2) at (-30:1) {$2$};
		\node[mutable] (3) at (210:1) {$3$};
		\draw[dashed,-angle 90,relative, out=15,in=165] (1) to (2);
		\draw[dashed,-angle 90,relative, out=-15,in=-165] (1) to (2);
		\draw[-angle 90,relative, out=15,in=165] (2) to (3);
		\draw[-angle 90,relative, out=-15,in=-165] (2) to (3);
		\draw[dashed,-angle 90,relative, out=25,in=155] (3) to (1);
		\draw[dashed,-angle 90,relative] (3) to (1);
		\draw[dashed,-angle 90,relative, out=-25,in=-155] (3) to (1);
	\end{scope}
	\begin{scope}[yshift=3.5cm,xshift=4.5cm]
		\path[use as bounding box] (-1,-.5) rectangle  (1,1);
		\node[mutable] (1) at (90:1) {$1$};
		\node[mutable,dashed] (2) at (-30:1) {$2$};
		\node[mutable] (3) at (210:1) {$3$};
		\draw[dashed,-angle 90,relative, out=15,in=165] (1) to (2);
		\draw[dashed,-angle 90,relative, out=-15,in=-165] (1) to (2);
		\draw[dashed,-angle 90,relative, out=15,in=165] (2) to (3);
		\draw[dashed,-angle 90,relative, out=-15,in=-165] (2) to (3);
		\draw[-angle 90,relative, out=25,in=155] (3) to (1);
		\draw[-angle 90,relative] (3) to (1);
		\draw[-angle 90,relative, out=-25,in=-155] (3) to (1);
	\end{scope}
	\begin{scope}[yshift=3.5cm,xshift=9cm]
		\path[use as bounding box] (-1,-.5) rectangle  (1,1);
		\node[mutable] (1) at (90:1) {$1$};
		\node[mutable] (2) at (-30:1) {$2$};
		\node[mutable,dashed] (3) at (210:1) {$3$};
		\draw[-angle 90,relative, out=15,in=165] (1) to (2);
		\draw[-angle 90,relative, out=-15,in=-165] (1) to (2);
		\draw[dashed,-angle 90,relative, out=15,in=165] (2) to (3);
		\draw[dashed,-angle 90,relative, out=-15,in=-165] (2) to (3);
		\draw[dashed,-angle 90,relative, out=25,in=155] (3) to (1);
		\draw[dashed,-angle 90,relative] (3) to (1);
		\draw[dashed,-angle 90,relative, out=-25,in=-155] (3) to (1);
	\end{scope}
\end{tikzpicture}
    \caption{Stereographic projection of three reductions of $\D(\Q_{2,2,3})$}
    \label{fig: reductions}
\end{figure}

The stereographic projection has been chosen so that the point $(1,1,1)$ maps to the origin, and so the green side of the stereographic projection of a wall is always the concave side.  A maximal green sequence for $\Q_{2,2,3}$ corresponds to a finite transverse path in $\D(\Q_{2,2,3})$ which travels from the inner-most chamber to the exterior, and only crosses walls from the concave side to the convex side.

Consider a finite transverse path which starts in the inner-most chamber, and only crosses walls from the concave side to the convex side.  As soon as it crosses a wall, the path is now trapped between the convex side of the wall it just crossed (which is a coordinate hyperplane, and hence a circle in the projection) and a purple half-space.  Hence, it can never reach the exterior chamber, and so a maximal green sequence cannot exist.


\begin{figure}[h!t]
	\begin{tikzpicture}[scale=-.45,xscale=-1]
	\begin{scope}
		\draw[wall] (0,-2.828) circle (3.46);
		\draw[wall] (2.45,1.414) circle (3.46);
		\draw[wall] (-2.45,1.414) circle (3.46);
		\begin{scope}
			\clip (0,-2.828) circle (3.46);
			\path[fill=black!20,even odd rule] (.816,-1.414) circle (2.582) (1.47,-.283) circle (2.498);
			\draw[wall]  (1.633,0) circle (2.582);
			\draw[wall]  (1.47,-.283) circle (2.498);
			\draw[wall]  (.816,-1.414) circle (2.582);
		\end{scope}
		\begin{scope}
			\clip (-2.45,1.414) circle (3.46);
			\path[fill=black!20,even odd rule] (-1.837,0.354) circle (2.739) (-.612,-1.768) circle (2.739);
			\draw[wall]  (-1.837,0.354) circle (2.739);
			\draw[wall]  (-.612,-1.768) circle (2.739);
		\end{scope}
		\begin{scope}
			\clip (2.45,1.414) circle (3.46);
			\path[fill=black!20,even odd rule] (-.816,1.414) circle (2.582) (.490,1.414) circle (2.498);
			\draw[wall]  (-.816,1.414) circle (2.582);
			\draw[wall]  (.490,1.414) circle (2.498);
			\draw[wall]  (.816,1.414) circle (2.582);
		\end{scope}
		\begin{scope}
			\clip (0,-2.828) circle (3.46);
			\clip (1.633,0) circle (2.582);
			\draw[wall] (.612,.354) circle (2.121);
		\end{scope}
		\draw[green!50!black,->] plot [smooth,tension=.7] coordinates {(0,0) (-.551,-.318) (-.328,-1.324) (.274,-1.741) (1.705,-1.969) (3.042,-1.171) (4.325,-1.498) (4.899,-2.828)};
		\node[green!50!black,above] at (4.899,-2.828) {$p$};
	\end{scope}
	\draw[wall,fill=black!20] (0,-1.035) arc (-71.6:-25.8:2.582) arc (64.2:75:3.464) arc (56.6:97.8:2.739) arc (-172.2:-165:3.464) arc 
	(161.3:360-149.2:2.498) arc (360-149.2:360-41.3:2.498) arc (-11.6:-167.6:2.582) arc (-77.6:-105.0:3.464) arc (360-123.4:65.6:2.739) arc (155.6:135.0:3.464) arc (101.3:-42.4:2.498) arc (-42.4:-101.3:2.498);
		
    	\end{tikzpicture}
    \caption{Stereographic projection of $16$ chambers of the scattering diagram $\D(\Q_{2,2,3})$, and the path of a green-to-red sequence}
    \label{fig: scattering223}
    
\end{figure}
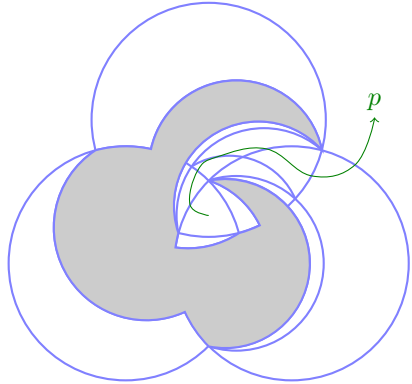

Figure \ref{fig: scattering223} draws a piece of $\D(\Q_{2,2,3})$; the grey denotes regions where the wall structure has not been computed.  Note the existence of a finite transverse path $p$ connecting the inner-most chamber to the exterior chamber; this corresponds to the green-to-red sequence in Figure \ref{fig: greentored}.  


\bibliography{MyNewBib}{}
\bibliographystyle{halpha}

\end{document}